\numberwithin{equation}{section}
\definecolor{webgreen}{rgb}{0,.5,0}
\definecolor{webbrown}{rgb}{.6,0,0}
\newtheorem{thm}{Theorem}
\newtheorem{theorem}[thm]{Theorem}
\newtheorem{lemma}{Lemma}
\newtheorem{corollary}[thm]{Corollary}
\title{Basic properties of a generalized third order sequence of numbers}
\author[]{Kunle Adegoke \\\href{mailto:kunle.adegoke@yandex.com}{\tt kunle.adegoke@yandex.com}}
\affil{Department of Physics and Engineering Physics, \mbox{Obafemi Awolowo University}, 220005 Ile-Ife, Nigeria}
\begin{document}
\date{}

\maketitle

\begin{abstract} 
\noindent We study the properties of the third order sequence $(w_n)=\left(w_n(a,b,c; r, s,t)\right)$ defined by the recurrence relation $w_n  = rw_{n - 1}  + sw_{n - 2} + tw_{n - 3}\, (n \ge 3)$ with $w_0  = a,\,w_1  = b,\,w_2=c$, where $a$, $b$, $c$, $r$, $s$ and $t$ are arbitrary complex numbers and $t\ne 0$. Properties examined include the partial sum of the terms of the sequence, with indices in arithmetic progression, as well as double binomial summation identities.
\end{abstract}
\section{Introduction and preliminary results}
We wish to study the properties of the linear homogeneous third order sequence $(w_n)=\left(w_n(a,b,c; r, s,t)\right)$ defined by the recurrence relation
\begin{equation}\label{eq.vhrb5b3}
w_n  = rw_{n - 1}  + sw_{n - 2} + tw_{n - 3}\, (n \ge 3)\,;
\end{equation}
with $w_0  = a,\,w_1  = b,\,w_2=c$, where $a$, $b$, $c$, $r$, $s$ and $t$ are arbitrary complex numbers, with $t\ne 0$. 

\medskip

The sequence $(w_n)$ and its special cases have been studied by various authors, notably Jarden \cite{jarden66}, Shannon and Horadam \cite{shannon72}, Yalavigi \cite{yalavigi72}, Pethe \cite{pethe88}, Gerdes \cite{gerdes78}, Waddil \cite{waddill91} and Rabinowitz \cite{rabinowitz96}. Our main aim here is to develop presumably new properties of $(w_n)$. Chief among these are the partial sum of the terms of the sequence, with indices in arithmetic progression, double binomial summation identities and multiple argument formulas.

\medskip

The table below shows some special cases of $(w_n)$, the first four of which are now well known in the literature.
\begin{table}[h!]
\begin{tabular}{lllllll}
Name & $a$ & $b$ & $c$ & $r$ & $s$ & $t$ \\ 
\hline
Tribonacci, $T_n$ & $0$ & 1 & 1 & 1 & 1 & 1 \\ 
Tribonacci-Lucas, $K_n$ & 3 & 1 & 3 & 1 & 1 & 1 \\ 
Padovan, $P_n$ & 1 & 1 & 1 & 0 & 1 & 1 \\ 
Perrin, $Q_n$ & 3 & 0 & 2 & 0 & 1 & 1 \\ 
Generalized Tribonacci, $\mathcal{T}_n$ & $a$ & $b$ & $c$ & 1 & 1 & 1 \\ 
Generalized Padovan, $\mathcal{P}_n$ & $a$ & $b$ & $c$ & 0 & 1 & 1 \\ 
\end{tabular}
\end{table}

Let $\alpha$, $\beta$ and $\gamma$ be the zeroes (assumed all distinct) of the characteristic polynomial \mbox{$x^3-rx^2-sx-t$} of the sequence $(w_n)$. Then,
\begin{equation}\label{eq.g4j8ymn}
\alpha  + \beta  + \gamma  = r,\;\alpha \beta \gamma  = t,\;\alpha \beta  + \alpha \gamma  + \beta \gamma  =  - s\,.
\end{equation}
Standard methods for solving difference equations give
\begin{equation}
w_n=A\alpha^n+B\beta^n+C\gamma^n\,,
\end{equation}
where
\begin{equation}
A = \frac{{\beta (a\gamma  - b) + c - b\gamma }}{{(\alpha  - \beta )(\alpha  - \gamma )}},\;B = \frac{{\gamma (a\alpha  - b) + c - b\alpha }}{{(\beta  - \alpha )(\beta  - \gamma )}},\;C = \frac{{\alpha (a\beta  - b) + c - b\beta }}{{(\gamma  - \alpha )(\gamma  - \beta )}}\,.
\end{equation}
Three important particular cases of $(w_n)$ are $(u_n(r,s,t))$, $(v_n(r,s,t))$ and $(z_n(r,s,t))$, with terms given by $u_n(r,s,t)=w_n(0,1,r;r,s,t)$, $v_n(r,s,t)=w_n(3,r,r^2+2s;r,s,t)$ and $z_n(r,s,t)=w_n(1,1,1;r,s,t)$. Thus,
\begin{equation}
u_n=A'\alpha^n+B'\beta^n+C'\gamma^n\,,
\end{equation}
\begin{equation}\label{eq.o6c7482}
v_n=\alpha^n+\beta^n+\gamma^n
\end{equation}
and
\begin{equation}
z_n=A''\alpha^n+B''\beta^n+C''\gamma^n\,;
\end{equation}
where
\begin{equation}
A' = \frac{\alpha}{{(\alpha  - \beta )(\alpha  - \gamma )}},\;B' = \frac{\beta}{{(\beta  - \alpha )(\beta  - \gamma )}},\;C' = \frac{\gamma}{{(\gamma  - \alpha )(\gamma  - \beta )}}\,,
\end{equation}
and
\begin{equation}
A'' = \frac{(\gamma-1)(\beta-1)}{{(\alpha  - \beta )(\alpha  - \gamma )}},\;B'' = \frac{(\gamma-1)(\alpha-1)}{{(\beta  - \alpha )(\beta  - \gamma )}},\;C'' = \frac{(\alpha-1)(\beta-1)}{{(\gamma  - \alpha )(\gamma  - \beta )}}\,.
\end{equation}
Note that $T_n=u_n(1,1,1)$, $K_n=v_n(1,1,1)$, $P_n=z_n(0,1,1)$ and $Q_n=v_n(0,1,1)$.

\medskip

Extension of the definition of $(w_n)$ to negative indices is provided by writing the recurrence relation as
\begin{equation}\label{eq.boxzh6a}
w_{-n}=(w_{-(n-3)}-rw_{-(n-2)}-sw_{-(n-1)})/t\,.
\end{equation}
The table below shows the first few terms of the sequences $(u_n)$, $v_n$ and $(z_n)$:
\begin{table}[h!]
\begin{tabular}{cccccccl}
$n$ & $-2$ & $-1$ & $0$ & $1$ & $2$ & $3$ & \multicolumn{1}{c}{$4$} \\ 
\hline
$u_n$ & $1/t$ & $ 0$ & $ 0$ & $ 1$ & $ r$ & $ r^2+s$ & \multicolumn{1}{c}{$ r^3+2rs+t$} \\ 
$v_n$ & $(s^2-2rt)/t^2$ & $ -s/t$ & $ 3$ & $ r$ & $ r^2+2s$ & $ r^3+3rs+3t$ & \multicolumn{1}{c}{$ r^4+4sr^2+4rt+2s^2$} \\ 
$z_n$ & $((t-s)(1-r)+s^2)/t^2$ & $ (1-r-s)/t$ & $ 1$ & $ 1$ & $ 1$ & $ t+r+s$ & \multicolumn{1}{c}{$ (t+s)(1+r)+r^2$} \\ 
\end{tabular}
\end{table}

The identities in \eqref{eq.g4j8ymn} and identity \eqref{eq.o6c7482} can be collected as
\begin{equation}
\alpha ^n  + \beta ^n  + \gamma ^n  = v_n\,,
\end{equation}
\begin{equation}
(\alpha \beta \gamma )^n  = t^n
\end{equation}
and
\begin{equation}
(\alpha \beta )^n  + (\alpha \gamma )^n  + (\beta \gamma )^n  = \frac{{(\alpha \beta \gamma )^n }}{{\gamma ^n }} + \frac{{(\alpha \gamma \beta )^n }}{{\beta ^n }} + \frac{{(\beta \gamma \alpha )^n }}{{\alpha ^n }} = t^n v_{ - n}\,.
\end{equation}
\begin{theorem}\label{theorem.jbqvo05}
The following identity holds for integers $n$ and $m$:
\[
w_{n+m}=u_nw_{m+1}+(u_{n+1}-ru_{n})w_m+tu_{n-1}w_{m-1}\,.
\]
\end{theorem}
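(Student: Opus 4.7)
Both sides of the claimed identity are $\C$-linear in the initial data $(a,b,c)$ of $(w_n)$, so I would exploit this to reduce the problem to a basis of the three-dimensional solution space of the recurrence \eqref{eq.vhrb5b3}. Since $\alpha,\beta,\gamma$ are distinct, the geometric sequences $\alpha^k,\beta^k,\gamma^k$ form such a basis, and by symmetry in the three roots it is enough to verify the identity when $w_k=\alpha^k$.

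\textbf{Reduction and proof.} Substituting $w_k=\alpha^k$ into the claim and cancelling a factor of $\alpha^{m-1}$, which is nonzero because $t=\alpha\beta\gamma\neq 0$, collapses the statement to a one-variable identity in which $m$ has disappeared, namely
\[
P(n):\qquad \alpha^{n+1}=u_n\alpha^2+(u_{n+1}-ru_n)\alpha+tu_{n-1},\qquad n\in\Z.
\]
Both sides of $P(n)$ satisfy the recurrence $x_{n+3}=rx_{n+2}+sx_{n+1}+tx_n$ in $n$: the left-hand side because $\alpha$ is a root of the characteristic polynomial, and the right-hand side because each of $u_{n-1},u_n,u_{n+1}$ does. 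Therefore $P(n)$ holds for all integers $n$ provided it holds at three consecutive values. I would check $P(0),P(1),P(2)$ by hand, using $u_{-1}=u_0=0,\ u_1=1,\ u_2=r,\ u_3=r^2+s$ together with $\alpha^3=r\alpha^2+s\alpha+t$ for $P(2)$. The extension to negative $n$ is automatic since the recurrence may be inverted when $t\neq 0$, cf.\ \eqref{eq.boxzh6a}.

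\textbf{Main obstacle.} There is no deep difficulty; the argument is mostly bookkeeping once the reduction to $P(n)$ has been made. The only delicate point is the cancellation of $\alpha^{m-1}$ for arbitrary integer $m$, positive or negative, which is precisely where the standing hypothesis $t\neq 0$ enters. A purely inductive proof on $n$ (without invoking the Binet representation) is also available, but the linearity argument above is shorter and avoids repeating the recurrence verification three times, once for each root.
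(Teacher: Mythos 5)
Your proof is correct, but it runs in the opposite direction from the paper's. The paper fixes $m$, posits the ansatz $w_{m+n}=f_1u_n+f_2u_{n-1}+f_3u_{n-2}$ (i.e.\ it decomposes along the $n$-direction, using the shifted copies of $(u_n)$ as a basis of the solution space of the recurrence), and reads off $f_1,f_2,f_3$ by evaluating at $n=0,1,2$; identity \eqref{eq.txorvr0} is then deduced as a special case of the theorem. You instead decompose along the initial-data direction, using linearity in $(a,b,c)$ and the basis $\{(\alpha^k),(\beta^k),(\gamma^k)\}$ to reduce the theorem to \eqref{eq.txorvr0}, which you prove directly by the standard ``same recurrence plus three consecutive agreements'' argument. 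The trade-offs: your route leans on the standing hypothesis that $\alpha,\beta,\gamma$ are distinct (otherwise the geometric sequences do not span), whereas the paper's ansatz works for arbitrary roots; on the other hand, your argument is logically self-contained in a way the paper's is not quite --- the paper never justifies that the posited representation of $w_{m+n}$ in terms of $u_n,u_{n-1},u_{n-2}$ exists at all (this needs the nonvanishing of the determinant of initial values, which equals $1/t$), it only computes what the coefficients must be if it does. Your explicit tracking of where $t\neq 0$ enters (inverting the recurrence, cancelling $\alpha^{m-1}$) is also cleaner than the paper's silent use of $u_{-2}=1/t$.
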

In particular, we have
\begin{equation}\label{eq.yn0hmyp}
w_n=bu_n+(c-br)u_{n-1}+atu_{n-2}
\end{equation}
which also gives
\begin{equation}
v_n=ru_n+2su_{n-1}+3tu_{n-2}\,,
\end{equation}
\begin{equation}
z_n  = u_n  + (1 - r)u_{n - 1}  + tu_{n - 2}
\end{equation}
and
\begin{equation}\label{eq.txorvr0}
\alpha^n=u_{n-1}\alpha^2+(u_{n}-ru_{n-1})\alpha+tu_{n-2}\,;
\end{equation}
with similar expressions for $\beta^n$ and $\gamma^n$.
\begin{proof}
We seek to express a number from the $(w_n)$ sequence as a linear combination of three numbers from the $(u_n)$ sequence. Let 
\begin{equation}\label{eq.nbl33qg}
w_{m+n}=f_1u_n+f_2u_{n-1}+f_3u_{n-2}\,,
\end{equation}
where the coefficients $f_1$, $f_2$ and $f_3$ are to be determined. Setting $n=0$, $n=1$ and $n=2$, in turn, we find $f_1=w_{m+1}$, $f_2=w_{m+2}-rw_{m+1}$ and $f_3=tw_m$ and the identity of the theorem is established after shifting the indices $m$ and $n$.
\end{proof}
Note that identity\eqref{eq.txorvr0} follows from the fact that the sequences $(\alpha^n)$, $(\beta^n)$ and $(\gamma^n)$ are also special cases of the generalized sequence $(w_n)$.

\medskip

The identities below, in Theorem \ref{theorem.f1jq8l2}, based on identities \eqref{eq.o6c7482} and \eqref{eq.txorvr0}, make it possible to access the negative index terms of $(w_n)$ and the special cases directly, without using the recurrence relation \eqref{eq.boxzh6a}.
\begin{lemma}\label{lemma.m2hum6l}
Let $a_0$, $a_1$, $a_2$, $\cdots$, $a_n$ and $b_0$, $b_1$, $b_2$, $\cdots$, $b_n$ be rational numbers. Let $\lambda_1$, $\lambda_2$, $\cdots$, $\lambda_n$ be linearly independent irrational numbers. Then
\[
a_0  + a_1 \lambda _1  + a_2 \lambda _2  +  \cdots  + a_n \lambda _n  = b_0  + b_1 \lambda _1  + b_2 \lambda _2  +  \cdots  + b_n \lambda _n 
\]
implies that $a_0=b_0$, $a_1=b_1$, $a_2=b_2$, $\cdots$, $a_n=b_n$.
\end{lemma}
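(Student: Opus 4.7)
The plan is to move everything to one side of the equation and reduce the claim to a statement about the vanishing of a rational linear combination. Setting $c_i := a_i - b_i$ for $i = 0, 1, \ldots, n$, the hypothesis becomes
\[
c_0 + c_1 \lambda_1 + c_2 \lambda_2 + \cdots + c_n \lambda_n = 0,
\]
where each $c_i \in \mathbb{Q}$. The conclusion $a_i = b_i$ is then equivalent to $c_i = 0$ for every $i$, so it suffices to show that no nontrivial rational combination of $1, \lambda_1, \ldots, \lambda_n$ can vanish.

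I would argue by contradiction. Suppose some $c_i$ is nonzero. If all $c_i$ with $i \geq 1$ are zero, the relation collapses to $c_0 = 0$, contradicting the supposition. Otherwise pick any $k \geq 1$ with $c_k \neq 0$ and solve for $\lambda_k$:
\[
\lambda_k = -\frac{c_0}{c_k} - \sum_{\substack{j=1 \\ j \neq k}}^{n} \frac{c_j}{c_k}\,\lambda_j.
\]
In the base case $n = 1$, this immediately yields $\lambda_1 \in \mathbb{Q}$, contradicting the hypothesis that $\lambda_1$ is irrational. In the general case $n \geq 2$, the displayed expression exhibits $\lambda_k$ as a $\mathbb{Q}$-linear combination of $1$ and the remaining $\lambda_j$, which contradicts the linear independence assumption on the $\lambda_i$ (read together with the rationals, as the context and usage make clear).

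The main subtlety is interpretive rather than computational: the argument hinges on reading ``linearly independent irrational numbers'' as the $\mathbb{Q}$-linear independence of the enlarged set $\{1, \lambda_1, \ldots, \lambda_n\}$. Under that reading—which is the one needed for the later applications to $1, \alpha, \alpha^2$ arising from the cubic characteristic polynomial—the lemma is essentially a restatement of the definition of linear independence, and the proof reduces to recording the subtraction in the first paragraph. The only real work is isolating the irrationality hypothesis as the content of the $n = 1$ case so that the statement reads uniformly in $n$.
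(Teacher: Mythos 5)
The paper states this lemma without proof, so there is nothing to compare your argument against: the author treats it as a standard fact. Your proof is correct and complete under the reading you adopt, and the interpretive point you flag is not merely cosmetic. With the literal hypothesis --- that $\lambda_1,\dots,\lambda_n$ alone are linearly independent over $\mathbb{Q}$ --- the statement is false: take $\lambda_1=\sqrt{2}$ and $\lambda_2=\sqrt{2}+1$, which are $\mathbb{Q}$-linearly independent irrationals, yet $0+1\cdot\lambda_1+0\cdot\lambda_2=-1+0\cdot\lambda_1+1\cdot\lambda_2$ with unequal coefficient lists. So the hypothesis must be read, as you do, as $\mathbb{Q}$-linear independence of the enlarged set $\{1,\lambda_1,\dots,\lambda_n\}$; this is also precisely what the application in Theorem \ref{theorem.f1jq8l2} requires for $1,\alpha,\alpha^2$ (and it implicitly presupposes that $\alpha$ has degree $3$ over $\mathbb{Q}$, which is not guaranteed by the paper's standing assumption that $r,s,t$ are arbitrary complex numbers). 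Under that reading, your subtract-and-isolate argument is a valid, essentially definitional proof, and supplying it where the paper omits one is a net improvement.
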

\begin{theorem}\label{theorem.f1jq8l2}
The following identities hold for any integer $n$:
\begin{equation}\label{eq.lmwxb7q}
u_{ - n}  =(u_{n - 1}^2  - u_n u_{n - 2} )/t^{n-1}\,,
\end{equation}
\begin{equation}\label{eq.xjwme1i}
v_{-n}=(v_n^2-v_{2n})/(2t^n)\,.
\end{equation}
\end{theorem}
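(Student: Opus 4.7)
The plan is to treat both identities as direct algebraic consequences of the Binet-type representations for $u_n$ and $v_n$ together with the product relation $\alpha\beta\gamma=t$ from \eqref{eq.g4j8ymn}.

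The identity \eqref{eq.xjwme1i} is the easier one and I would dispose of it first. Squaring $v_n=\alpha^n+\beta^n+\gamma^n$ gives $v_n^2 = v_{2n} + 2\bigl((\alpha\beta)^n + (\alpha\gamma)^n + (\beta\gamma)^n\bigr)$, and the bracketed cross-term sum equals $t^n v_{-n}$ by the identity $(\alpha\beta)^n + (\alpha\gamma)^n + (\beta\gamma)^n = t^n v_{-n}$ displayed just above the theorem. Solving for $v_{-n}$ gives the claim immediately.

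For \eqref{eq.lmwxb7q}, I would expand $u_{n-1}^2 - u_n u_{n-2}$ using $u_n=A'\alpha^n+B'\beta^n+C'\gamma^n$. The diagonal contributions cancel, and pairing an off-diagonal term indexed by $(i,j)$ with its mirror $(j,i)$ collapses each symmetric pair to $-A_i'A_j'(\alpha_i-\alpha_j)^2(\alpha_i\alpha_j)^{n-2}$. Inserting the explicit form of $A_i'A_j'$ cancels the factor $(\alpha_i-\alpha_j)^2$, leaving $(\alpha_i\alpha_j)^{n-1}/[(\alpha_k-\alpha_i)(\alpha_k-\alpha_j)]$, where $\alpha_k$ denotes the third root. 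Since $\alpha_i\alpha_j = t/\alpha_k$, this becomes exactly $t^{n-1}$ times the $\alpha_k^{-n}$ component of $u_{-n}$, and summing over the three unordered pairs yields $u_{n-1}^2 - u_n u_{n-2} = t^{n-1}u_{-n}$.

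The main obstacle is the sign bookkeeping in the second calculation: one must rewrite $(\alpha-\gamma)(\beta-\gamma)$ as $(\gamma-\alpha)(\gamma-\beta)$ (two sign flips cancel) in order to recognise the expression $1/[\gamma^{n-1}(\gamma-\alpha)(\gamma-\beta)]$ as $C'\gamma^{-n}$. Once this match is spotted for one pair, the other two contributions follow by symmetry in $\alpha$, $\beta$, $\gamma$, and the identity comes out without further effort. Lemma \ref{lemma.m2hum6l} is not strictly needed here, but it guarantees that the coefficient-matching implicit in reading off $A'\alpha^{-n}+B'\beta^{-n}+C'\gamma^{-n}$ as $u_{-n}$ is legitimate in the generic case.
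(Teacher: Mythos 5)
Your proof of \eqref{eq.xjwme1i} is exactly the paper's: square $v_n=\alpha^n+\beta^n+\gamma^n$ and identify the cross terms with $t^nv_{-n}$. For \eqref{eq.lmwxb7q}, however, you take a genuinely different and, to my mind, cleaner route. The paper posits $\alpha^{-n}=c_1\alpha^2+c_2\alpha+c_3$, multiplies by $\alpha^n$, expands each power via \eqref{eq.txorvr0}, invokes Lemma \ref{lemma.m2hum6l} to extract a $3\times 3$ linear system, solves for $c_1$ by Cramer's rule, matches against the coefficient $u_{-n-1}$ read off from \eqref{eq.txorvr0} at index $-n$, and finally needs a separate induction to show that the determinant $\Delta_n$ equals $t^n$. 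You instead expand $u_{n-1}^2-u_nu_{n-2}$ directly from the Binet form $u_n=A'\alpha^n+B'\beta^n+C'\gamma^n$; the diagonal terms cancel, each symmetric off-diagonal pair collapses to $-A_i'A_j'(\alpha_i-\alpha_j)^2(\alpha_i\alpha_j)^{n-2}$, and the explicit $A_i'$ cancel the squared difference, leaving $t^{n-1}A_k'\alpha_k^{-n}$ for the complementary root $\alpha_k$ via $\alpha_i\alpha_j=t/\alpha_k$. I checked the sign bookkeeping and it is right. Your approach buys a shorter argument that avoids both the determinant induction and the somewhat delicate appeal to Lemma \ref{lemma.m2hum6l} (whose hypothesis of rational coefficients and linearly independent irrationals is not literally available for arbitrary complex $r$, $s$, $t$); its only cost is that it leans on the standing assumption that $\alpha$, $\beta$, $\gamma$ are distinct, which the paper's Binet coefficients require anyway. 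One small remark: your closing caveat about needing coefficient matching to read off $u_{-n}$ is unnecessary, since $A'\alpha^{n}+B'\beta^{n}+C'\gamma^{n}$ satisfies the recurrence for all integers $n$ and agrees with $u_n$ at three consecutive indices, so it equals $u_{-n}$ at $-n$ outright.
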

\begin{proof}
Since $\alpha$ and $\alpha^2$ are linearly independent irrationals, write
\begin{equation}\label{eq.x59evau}
\alpha^{-n}=c_1\alpha^2+c_2\alpha+c_3\,,
\end{equation}
where $n$ is an integer and $c_1$, $c_2$ and $c_3$ are to be determined.
Thus, we have
\begin{equation}\label{eq.xircjfm}
\begin{split}
1 &= c_1 \alpha ^{n + 2}  + c_2 \alpha ^{n + 1}  + c_3 \alpha ^n\\
&= \alpha ^2 (c_1 f_{n + 2}  + c_2 f_{n + 1}  + c_3 f_n ) + \alpha (c_1 g_{n + 2}  + c_2 g_{n + 1}  + c_3 g_n )\\
&\quad+ (c_1 h_{n + 2}  + c_2 h_{n + 1}  + c_3 h_n )\,,
\end{split}
\end{equation}
where we have used identity \eqref{eq.txorvr0} with $f_n=u_{n-1}$, $g_n=u_n-ru_{n-1}$ and $h_n=tu_{n-2}$.

\medskip

From Lemma \ref{lemma.m2hum6l} and identity \eqref{eq.xircjfm} we have
\[
f_{n + 2} c_1  + f_{n + 1} c_2  + f_n c_3  = 0\,,
\]
\[
g_{n + 2} c_1  + g_{n + 1} c_2  + g_n c_3  = 0
\]
and
\[
h_{n + 2} c_1  + h_{n + 1} c_2  + h_n c_3  = 1\,,
\]
to be solved simultaneously for the coefficients $c_1$, $c_2$ and $c_3$.

\medskip

We find
\begin{equation}\label{eq.n7r35m0}
c_1=\frac{g_nf_{n+1}-f_ng_{n+1}}{\Delta_n}\,,
\end{equation}
where
\begin{equation}
\Delta _n  = \left| {\begin{array}{*{20}c}
   {f_{n + 2} } & {f_{n + 1} } & {f_n }  \\
   {g_{n + 2} } & {g_{n + 1} } & {g_n }  \\
   {h_{n + 2} } & {h_{n + 1} } & {h_n }  \\
\end{array}} \right|\,.
\end{equation}
Now, from \eqref{eq.txorvr0}, we have
\begin{equation}\label{eq.m0bph93}
\alpha^{-n}=f_{-n}\alpha^2+g_{-n}\alpha+h_{-n}\,.
\end{equation}
Equating coefficients of $\alpha^2$ from \eqref{eq.x59evau} and \eqref{eq.m0bph93}, using \eqref{eq.n7r35m0}, and shifting index gives
\begin{equation}
f_{ - n+1}  = \frac{{g_{n-1} f_{n}  - f_{n-1} g_{n} }}{{\Delta _{n-1} }}\,,
\end{equation}
so that we have
\begin{equation}
u_{ - n}  = \frac{{u_{n - 1}^2  - u_n u_{n - 2} }}{{\Delta _{n - 1} }}\,.
\end{equation}
Application of the principle of mathematical induction shows that $\Delta_n=t^n$ and identity \eqref{eq.lmwxb7q} follows.

\medskip

We have
\[
\begin{split}
v_n^2  &= (\alpha ^n  + \beta ^n  + \gamma ^n )^2\\
 &= \alpha ^{2n}  + \beta ^{2n}  + \gamma ^{2n}  + 2((\alpha \beta )^n  + (\alpha \gamma )^n  + (\beta \gamma )^n )\\
 &= \alpha ^{2n}  + \beta ^{2n}  + \gamma ^{2n}  + 2t^n (\gamma ^{ - n}  + \beta ^{ - n}  + \alpha ^{ - n} )\\
 &= v_{2n}  + 2t^n v_{ - n}\,,
\end{split}
\]
from which identity \eqref{eq.xjwme1i} follows.
\end{proof}
Use of \eqref{eq.lmwxb7q} in \eqref{eq.yn0hmyp} gives
\begin{equation}\label{eq.fsxsk5o}
w_{ - n}  = \frac{{bt(u_{n - 1}^2  - u_n u_{n - 2} ) + (c - br)(u_n^2  - u_{n + 1} u_{n - 1} ) + a(u_{n + 1}^2  - u_{n + 2} u_n )}}{{\left( {bu_n  + (c - br)u_{n - 1}  + atu_{n - 2} } \right)t^n }}w_n\,.
\end{equation}
We require the following identities in the sequel:
\begin{lemma}\label{lemma.t4jlec4}
The following identities hold for arbitrary $x_1$, $x_2$, $x_3$:
\begin{equation}
x_1^2  + x_2^2  + x_3^2  = (x_1 + x_2 + x_3)^2  - 2(x_1x_2 + x_1x_3 + x_2x_3)\,,
\end{equation}
\begin{equation}\label{eq.oihzhb7}
x_1^3  + x_2^3  + x_3^3  = (x_1 + x_2 + x_3)^3  - 3(x_1 + x_2 + x_3)(x_1x_2 + x_1x_3 + x_2x_3) + 3x_1x_2x_3\,,
\end{equation}
and
\begin{equation}\label{eq.jwknqg8}
\begin{split}
x_1^4  + x_2^4  + x_3^4  &= (x_1 + x_2 + x_3)^4  - 4(x_1^2  + x_2^2  + x_3^2 )(x_1x_2 + x_1x_3 + x_2x_3)\\
&\quad- 6((x_1x_2)^2  + (x_1x_3)^2  + (x_2x_3)^2 ) - 8x_1x_2x_3(x_1 + x_2 + x_3)\,.
\end{split}
\end{equation}
\end{lemma}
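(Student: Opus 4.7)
The plan is to verify each of the three identities by direct algebraic expansion of the right-hand side. These are standard symmetric-function relations expressing the power sums $p_k = x_1^k + x_2^k + x_3^k$ in terms of the elementary symmetric polynomials $e_1 = x_1 + x_2 + x_3$, $e_2 = x_1 x_2 + x_1 x_3 + x_2 x_3$ and $e_3 = x_1 x_2 x_3$. One could alternatively derive all three from the Newton--Girard recurrences $p_1 = e_1$, $p_2 = e_1 p_1 - 2 e_2$, $p_3 = e_1 p_2 - e_2 p_1 + 3 e_3$, $p_4 = e_1 p_3 - e_2 p_2 + e_3 p_1$, but for these small degrees direct expansion is cleaner and self-contained.

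For the first identity I would apply the multinomial theorem to $(x_1 + x_2 + x_3)^2$, collect the six cross-terms into $2 e_2$, and subtract. For the second, expansion of $(x_1 + x_2 + x_3)^3$ produces $p_3$, the mixed monomials $3(x_1^2 x_2 + x_1^2 x_3 + x_2^2 x_1 + x_2^2 x_3 + x_3^2 x_1 + x_3^2 x_2)$, and $6 x_1 x_2 x_3$. Observing that the mixed-monomial sum equals $3(e_1 e_2 - 3 e_3)$, rearrangement delivers the stated form.

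The third identity takes slightly more care. I would expand $(x_1 + x_2 + x_3)^4$ via the multinomial theorem, isolate $p_4$, and gather the remaining monomials according to their type (monomials $x_i^3 x_j$, $x_i^2 x_j^2$, and $x_i^2 x_j x_k$). The key substitutions are $p_2 = e_1^2 - 2 e_2$ and the auxiliary identity $(x_1 x_2)^2 + (x_1 x_3)^2 + (x_2 x_3)^2 = e_2^2 - 2 e_1 e_3$ (itself an instance of the first identity applied to the triple $(x_1 x_2, x_1 x_3, x_2 x_3)$). These allow me to reorganise the collected terms into the exact combination appearing on the right of the claimed identity, which upon simplification reduces to the closed form $p_4 = e_1^4 - 4 e_1^2 e_2 + 2 e_2^2 + 4 e_1 e_3$.

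The main obstacle is purely the bookkeeping of multinomial coefficients in the fourth-power expansion; there is no conceptual difficulty, since all three assertions are polynomial identities in three free variables and can in principle be checked by matching coefficients of each monomial.
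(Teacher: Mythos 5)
Your proposal is correct: the paper states Lemma \ref{lemma.t4jlec4} without any proof (treating these as standard symmetric-function identities), and your direct expansion / Newton--Girard verification supplies exactly the routine check that is being omitted. In particular your reduction of the fourth-power identity via $p_2 = e_1^2 - 2e_2$ and $(x_1x_2)^2+(x_1x_3)^2+(x_2x_3)^2 = e_2^2 - 2e_1e_3$ to the closed form $p_4 = e_1^4 - 4e_1^2e_2 + 2e_2^2 + 4e_1e_3$ agrees with the Newton--Girard value, so the stated identity \eqref{eq.jwknqg8} is confirmed.
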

\begin{lemma}\label{lemma.iux34nd}
The following identity holds for integers $n$ and $m$ and arbitrary $x_1$, $x_2$, $x_3$, $c_1$, $c_2$ and $c_3$:
\[
\begin{split}
c_1x_1^{n + m}  + c_2x_2^{n + m}  + c_3x_3^{n + m}  &= (x_1^m  + x_2^m  + x_3^m )(c_1x_1^n  + c_2x_2^n  + c_3x_3^n )\\
&\quad - ((x_1 x_2 )^m  + (x_1 x_3 )^m  + (x_2 x_3 )^m )(c_1x_1^{n - m}  + c_2x_2^{n - m}  + c_3x_3^{n - m} )\\
&\qquad +  c_1x_1^{n - m} (x_2 x_3 )^m  + c_2x_2^{n - m} (x_1 x_3 )^m + c_3x_3^{n - m} (x_1 x_2 )^m\,.
\end{split}
\]
\end{lemma}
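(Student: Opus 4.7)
The identity is a pure polynomial identity in $x_1$, $x_2$, $x_3$, $c_1$, $c_2$, $c_3$ (there is nothing sequence-theoretic about it once stated), so my plan is to verify it by direct expansion and matching of monomials on the right-hand side.

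First, I would expand the first product $(x_1^m+x_2^m+x_3^m)(c_1x_1^n+c_2x_2^n+c_3x_3^n)$. This produces nine monomials: the three \emph{diagonal} terms $c_1x_1^{n+m}+c_2x_2^{n+m}+c_3x_3^{n+m}$, which are exactly the desired left-hand side, plus six \emph{off-diagonal} terms of the form $c_ix_i^nx_j^m$ with $i\neq j$.

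Next, I would expand the second product $((x_1x_2)^m+(x_1x_3)^m+(x_2x_3)^m)(c_1x_1^{n-m}+c_2x_2^{n-m}+c_3x_3^{n-m})$, which also yields nine monomials. Tracking them carefully, six of these reproduce precisely the six off-diagonal terms from the first product (for instance $(x_1x_2)^m\cdot c_1x_1^{n-m}=c_1x_1^nx_2^m$), so subtracting the second product from the first cancels all off-diagonals and leaves only the three diagonal terms together with the three remaining \emph{mixed} monomials $c_3x_1^mx_2^mx_3^{n-m}$, $c_2x_1^mx_3^mx_2^{n-m}$, $c_1x_2^mx_3^mx_1^{n-m}$, each carrying a minus sign.

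Finally, I would observe that the third group $c_1x_1^{n-m}(x_2x_3)^m+c_2x_2^{n-m}(x_1x_3)^m+c_3x_3^{n-m}(x_1x_2)^m$ on the right-hand side is exactly these three mixed monomials with the opposite sign, so they cancel and only the diagonal terms survive, matching the left-hand side. The main obstacle is purely bookkeeping; aside from keeping the nine-plus-nine-plus-three monomials organized (and noting that the identity is formal, so negative exponents are handled by interpreting $x_i^{n-m}$ as a Laurent monomial, which is harmless since the later application is to $x_i=\alpha,\beta,\gamma$ with $\alpha\beta\gamma=t\neq0$), there is no conceptual difficulty.
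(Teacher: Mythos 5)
Your expansion is correct: the nine terms of the first product split into the three diagonal terms (the left-hand side) plus six cross terms, the second product reproduces those six cross terms plus the three mixed monomials $c_ix_i^{n-m}(x_jx_k)^m$, and the third group cancels the mixed monomials exactly. The paper states this lemma without proof, evidently regarding it as a routine algebraic verification, and your argument is precisely that verification, so there is nothing to reconcile.
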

\begin{lemma}\label{lemma.pa4bxyr}
The following identity holds for integers $m$, $n$ and $k$ and arbitrary $c_1$, $c_2$, $c_3$, $x_1$, $x_2$ and $x_3$:
\[
\begin{split}
&\sum_{j = 0}^k {(c_1 x_1^{mj + n}  + c_2 x_2^{mj + n}  + c_3 x_3^{mj + n} )h^j }\\ 
&= (x_1x_2x_3)^m \frac{{c_1 x_1^{mk + n}  + c_2 x_2^{mk + n}  + c_3 x_3^{mk + n} }}{{(x_1x_2x_3)^m h^3  - ((x_1x_3)^m  + (x_2x_3)^m  + (x_1x_2)^m )h^2  + (x_1^m  + x_2^m  + x_3^m )h - 1}}h^{k + 3}\\ 
&\quad\; - \frac{(x_1^m  + x_2^m  + x_3^m )(c_1 x_1^{mk + m + n}  + c_2 x_2^{mk + m + n}  + c_3 x_3^{mk + m + n} )}{{(x_1x_2x_3)^m h^3  - ((x_1x_3)^m  + (x_2x_3)^m  + (x_1x_2)^m )h^2  + (x_1^m  + x_2^m  + x_3^m )h - 1}}h^{k + 2}\\
&\quad\; + \frac{(c_1 x_1^{mk + 2m + n}  + c_2 x_2^{mk + 2m + n}  + c_3 x_3^{mk + 2m + n} )}{{(x_1x_2x_3)^m h^3  - ((x_1x_3)^m  + (x_2x_3)^m  + (x_1x_2)^m )h^2  + (x_1^m  + x_2^m  + x_3^m )h - 1}}h^{k + 2}\\
&\quad\; + \frac{{c_1 x_1^{mk + m + n}  + c_2 x_2^{mk + m + n}  + c_3 x_3^{mk + m + n} }}{{(x_1x_2x_3)^m h^3  - ((x_1x_3)^m  + (x_2x_3)^m  + (x_1x_2)^m )h^2  + (x_1^m  + x_2^m  + x_3^m )h - 1}}h^{k + 1}\\
&\quad\;\;- (x_1x_2x_3)^m \frac{{c_1 x_1^{n - m}  + c_2 x_2^{n - m}  + c_3 x_3^{n - m} }}{{(x_1x_2x_3)^m h^3  - ((x_1x_3)^m  + (x_2x_3)^m  + (x_1x_2)^m )h^2  + (x_1^m  + x_2^m  + x_3^m )h - 1}}h^2\\ 
&\quad\;\;\;+ \frac{(x_1^m  + x_2^m  + x_3^m )(c_1 x_1^n  + c_2 x_2^n  + c_3 x_3^n ) - (c_1 x_1^{n - m}  + c_2 x_2^{n - m}  + c_3 x_3^{n - m} )}{{(x_1x_2x_3)^m h^3  - ((x_1x_3)^m  + (x_2x_3)^m  + (x_1x_2)^m )h^2  + (x_1^m  + x_2^m  + x_3^m )h - 1}}h\\
&\quad\;\;\;\;- \frac{{c_1 x_1^n  + c_2 x_2^n  + c_3 x_3^n }}{{(x_1x_2x_3)^m h^3  - ((x_1x_3)^m  + (x_2x_3)^m  + (x_1x_2)^m )h^2  + (x_1^m  + x_2^m  + x_3^m )h - 1}}\,.
\end{split}
\]
\end{lemma}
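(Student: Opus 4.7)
The idea is to write the sum as a combination of three ordinary geometric series. Splitting by the index $i$ and pulling out $c_i x_i^n$ produces
\[
\sum_{j=0}^{k}(c_1 x_1^{mj+n} + c_2 x_2^{mj+n} + c_3 x_3^{mj+n}) h^j = \sum_{i=1}^{3} c_i x_i^n \, \frac{(x_i^m h)^{k+1} - 1}{x_i^m h - 1},
\]
so the entire task reduces to putting these three fractions over a common denominator and identifying the result with the right-hand side of the lemma. First I would multiply the three denominators together and observe that expanding $(x_1^m h - 1)(x_2^m h - 1)(x_3^m h - 1)$ in the elementary symmetric polynomials of $x_1^m, x_2^m, x_3^m$ gives
\[
(x_1 x_2 x_3)^m h^3 - ((x_1 x_2)^m + (x_1 x_3)^m + (x_2 x_3)^m) h^2 + (x_1^m + x_2^m + x_3^m) h - 1,
\]
which is precisely the denominator appearing in every term on the right-hand side.

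The remaining work is to expand the combined numerator
\[
\mathcal{N} := \sum_{i=1}^3 c_i x_i^n \bigl[(x_i^m h)^{k+1} - 1\bigr] \prod_{\ell \ne i}(x_\ell^m h - 1)
\]
and group terms by powers of $h$. For each fixed $i$, the two-factor product $\prod_{\ell \ne i}(x_\ell^m h - 1)$ is quadratic in $h$; I would rewrite its coefficients using the symmetric-function identities $x_j^m x_k^m = (x_1 x_2 x_3)^m / x_i^m$ and $x_j^m + x_k^m = (x_1^m + x_2^m + x_3^m) - x_i^m$ (with $\{j,k\} = \{1,2,3\} \setminus \{i\}$). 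After distributing and summing over $i$, the coefficient of every power $h^p$ collapses into a single three-term combination $c_1 x_1^{\nu} + c_2 x_2^{\nu} + c_3 x_3^{\nu}$ whose exponent $\nu$ is determined by $p$, matching the structure of the lemma.

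In practice it is cleanest to handle the two pieces of $[(x_i^m h)^{k+1} - 1]$ separately. The $(x_i^m h)^{k+1}$ piece contributes the high-order terms at $h^{k+1}$, $h^{k+2}$ and $h^{k+3}$, while the $-1$ piece contributes the low-order terms at $h^0$, $h^1$ and $h^2$; these correspond to the two visible blocks of the stated identity. No conceptual obstacle appears to arise; the entire proof is pure algebraic bookkeeping, and the main risk is a sign slip in one of the six coefficient computations.
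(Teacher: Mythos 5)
Your proposal is correct and follows exactly the route the paper takes: the paper's own proof simply records the three geometric progression sums $\sum_{j=0}^{k} x_i^{mj+n} h^j = (x_i^{mk+m+n}h^{k+1} - x_i^n)/(x_i^m h - 1)$ and observes that the stated identity is their linear combination over the common denominator $(x_1^m h-1)(x_2^m h-1)(x_3^m h-1)$. Your write-up merely spells out the symmetric-function bookkeeping that the paper leaves implicit.
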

\begin{proof}
The identity expresses the linear combination of the following geometric progression summation identities:
\[
\sum_{j = 0}^k {x_1^{mj + n} h^j }  = \frac{{x_1^{mk + m + n} h^{k + 1}  - x_1^n }}{{x_1^m h - 1}},\quad\sum_{j = 0}^k {x_2^{mj + n} h^j }  = \frac{{x_2^{mk + m + n} h^{k + 1}  - x_2^n }}{{x_2^m h - 1}}
\]
and
\[
\sum_{j = 0}^k {x_3^{mj + n} h^j }  = \frac{{x_3^{mk + m + n} h^{k + 1}  - x_3^n }}{{x_3^m h - 1}}\,.
\]
\end{proof}
\begin{lemma}[{\cite[Lemma 5]{adegoke18c}}]\label{lem.h2de9i7}
Let $(X_n)$ be any arbitrary sequence, $X_n$ satisfying a four-term recurrence relation $hX_n=f_1X_{n-c_1}+f_2X_{n-c_2}+f_3X_{n-c_3}$, where $h$, $f_1$, $f_2$ and $f_3$ are arbitrary non-vanishing functions and $c_1$, $c_2$ and $c_3$ are integers. Then, the following identities hold:
\begin{equation}\label{eq.xfz9ytc}
\sum_{j = 0}^k {\sum_{i = 0}^j {\binom kj\binom jif_3^{k - j} f_2^{k + j - i} f_1^i X_{n - c_3k + (c_3 - c_2)j + (c_2 - c_1)i} } }  = h^kf_2^k X_n\,, 
\end{equation}
\begin{equation}
\sum_{j = 0}^k {\sum_{i = 0}^j {\binom kj\binom jif_2^{k - j} f_3^{k + j - i} f_1^i X_{n - c_2k + (c_2 - c_3)j + (c_3 - c_1)i} } }  = h^kf_3^k X_n\,,
\end{equation}
\begin{equation}\label{eq.vmi1kkt}
\sum_{j = 0}^k {\sum_{i = 0}^j {\binom kj\binom jif_1^{k - j} f_3^{k + j - i} f_2^i X_{n - c_1k + (c_1 - c_3)j + (c_3 - c_2)i} } }  = h^kf_3^k X_n\,, 
\end{equation}
\begin{equation}
\sum_{j = 0}^k {\sum_{i = 0}^j {( - 1)^i \binom kj\binom jih^i f_3^{k - j} f_2^{j - i} X_{n - (c_3 - c_1)k + (c_3 - c_2)j + c_2i} } }  = ( - f_1 )^k X_n \,,
\end{equation}
\begin{equation}
\sum_{j = 0}^k {\sum_{i = 0}^j {( - 1)^i \binom kj\binom jih^i f_3^{k - j} f_1^{j - i} X_{n - (c_3 - c_2)k + (c_3 - c_1)j + c_1i} } }  = ( - f_2 )^k X_n
\end{equation}
and
\begin{equation}\label{eq.o7540wl}
\sum_{j = 0}^k {\sum_{i = 0}^j {( - 1)^i \binom kj\binom jih^i f_2^{k - j} f_1^{j - i} X_{n - (c_2 - c_3)k + (c_2 - c_1)j + c_1i} } }  = ( - f_3 )^k X_n \,.
\end{equation}

\end{lemma}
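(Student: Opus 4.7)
The plan is to read the recurrence as an operator identity and then raise it to the $k$-th power via the multinomial theorem. Let $E$ denote the forward-shift operator $(EY)_n = Y_{n+1}$, so that the hypothesis asserts
\[
h \;=\; f_1 E^{-c_1} + f_2 E^{-c_2} + f_3 E^{-c_3}
\]
as an operator identity on the sequence $(X_n)$. Because $h,f_1,f_2,f_3$ are scalars (treated as constants in the index $n$) and the shift operators commute, the multinomial theorem yields
\[
h^k X_n \;=\; \sum_{\alpha+\beta+\gamma=k} \binom{k}{\alpha,\beta,\gamma}\, f_1^{\alpha} f_2^{\beta} f_3^{\gamma}\, X_{n-c_1\alpha-c_2\beta-c_3\gamma}.
\]

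To recover \eqref{eq.xfz9ytc} I would multiply through by $f_2^k$, then reindex the sum by setting $\alpha=i,\ \beta=j-i,\ \gamma=k-j$ with $0\le i\le j\le k$. The factorization $\binom{k}{\alpha,\beta,\gamma}=\binom{k}{j}\binom{j}{i}$ converts the single multinomial sum into the nested double sum printed in the statement; one then checks that
$-c_1\alpha-c_2\beta-c_3\gamma = -c_3 k + (c_3-c_2)j + (c_2-c_1)i$, which matches the displayed index shift. Identity \eqref{eq.vmi1kkt} is obtained by the analogous permutation of the roles of $(f_1,f_2,f_3)$ and $(c_1,c_2,c_3)$, and the second identity in the list is similar.

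For the three identities whose right-hand side is $(-f_\ell)^k X_n$, the idea is to rearrange the recurrence so that $-f_\ell X_{n-c_\ell}$ is isolated and then shift $n\mapsto n+c_\ell$. For example, solving for $-f_1X_n$ produces the operator identity
\[
-f_1 \;=\; -h\, E^{c_1} + f_2 E^{c_1-c_2} + f_3 E^{c_1-c_3}.
\]
Raising this to the $k$-th power via the multinomial theorem yields the factor $(-h)^i$, whence the signs $(-1)^i$ in the statement, together with the index shift $-(c_3-c_1)k+(c_3-c_2)j+c_2 i$ once the multinomial triple is re-parameterized as before. The two remaining ``signed'' identities are obtained by cyclically permuting the roles of $f_1,f_2,f_3$.

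The only real obstacle is bookkeeping: keeping track of which coefficient ($f_\ell^k$ or $(-h)^{\cdot}$) arises from which rearrangement, and matching each multinomial triple $(\alpha,\beta,\gamma)$ to the two-index parametrization $(i,j)$ so that the shift exponent $-c_1\alpha-c_2\beta-c_3\gamma$ reduces correctly to the expression printed in the statement. Since all six identities are instances of the same operator principle applied to the three possible rearrangements of the recurrence (combined with the choice of which auxiliary factor $f_\ell^k$ one wishes to isolate on the right), the verification for each identity is a short, mechanical calculation once the first case has been worked through.
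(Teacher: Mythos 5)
Your proposal is correct, and it is worth noting that the paper itself supplies no proof of this lemma at all: it is imported by citation as Lemma 5 of the reference \cite{adegoke18c}. Your operator/multinomial argument is a complete, self-contained derivation. I checked the bookkeeping: with $(\alpha,\beta,\gamma)=(i,\,j-i,\,k-j)$ one has $\binom{k}{\alpha,\beta,\gamma}=\binom{k}{j}\binom{j}{i}$ and $-c_1\alpha-c_2\beta-c_3\gamma=-c_3k+(c_3-c_2)j+(c_2-c_1)i$, which gives \eqref{eq.xfz9ytc} after multiplying by $f_2^k$; the permuted assignments give the second and third identities, and the rearranged operator identity $-f_\ell=-hE^{c_\ell}+\sum_{m\ne \ell}f_mE^{c_\ell-c_m}$ gives the three signed identities with exactly the printed index shifts. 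Two small points deserve to be made explicit. First, raising the operator identity to the $k$-th power is legitimate because the recurrence holds at every index $n$, so $(f_1E^{-c_1}+f_2E^{-c_2}+f_3E^{-c_3})^kX_n=h^kX_n$ follows by a one-line induction on $k$; this is the step your phrase ``multinomial theorem'' silently relies on. Second, your parenthetical assumption that $h,f_1,f_2,f_3$ are constant in $n$ is essential (otherwise multiplication by these functions would not commute with the shifts), and it is consistent with how the lemma is applied in this paper, where these quantities depend on $m$, $r$, $s$, $t$ but not on the running index $n$.
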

\section{Main results}\label{sec.main}
\begin{theorem}[\textbf{Partial sum of the terms of the $w$ sequence with indices in arithmetic progression}]\label{theorem.fx5i6dy}
The following identity holds for integers $m$, $n$ and $k$:
\[
\begin{split}
\sum_{j = 0}^k {w_{n + jm} h^j }  &= \frac{{t^m w_{n + km} h^{k + 3}  - (v_m w_{n + m + km}  - w_{n + 2m + km} )h^{k + 2} }}{{t^m h^3  - t^m v_{ - m} h^2  + v_m h - 1}}\\
&\qquad+\frac{{w_{n + m + km} h^{k + 1}  - t^m w_{n - m} h^2  + (v_m w_n  - w_{n + m} )h - w_n }}{{t^m h^3  - t^m v_{ - m} h^2  + v_m h - 1}}\,.
\end{split}
\]

\end{theorem}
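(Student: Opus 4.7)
The plan is to instantiate Lemma \ref{lemma.pa4bxyr} with the Binet representation of $(w_n)$. Specifically, I would take $(x_1,x_2,x_3)=(\alpha,\beta,\gamma)$, the three roots of the characteristic polynomial of the recurrence, and $(c_1,c_2,c_3)=(A,B,C)$, the Binet coefficients of $(w_n)$ recorded in the introduction. Then $c_1 x_1^{p}+c_2 x_2^{p}+c_3 x_3^{p}=A\alpha^p+B\beta^p+C\gamma^p=w_p$ for every integer $p$, so the left-hand side of the lemma is exactly $\sum_{j=0}^{k} w_{n+jm}h^{j}$, the object we wish to evaluate.

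It remains to simplify the right-hand side. The elementary symmetric functions of $\alpha^m,\beta^m,\gamma^m$ that appear have closed forms in terms of $t$, $v_m$, and $v_{-m}$: from \eqref{eq.g4j8ymn}, $(x_1 x_2 x_3)^m=t^m$; from \eqref{eq.o6c7482}, $x_1^m+x_2^m+x_3^m=v_m$; and the identity $(\alpha\beta)^m+(\alpha\gamma)^m+(\beta\gamma)^m=t^m v_{-m}$ derived just before Theorem \ref{theorem.jbqvo05} supplies the third. Substituting these into the common denominator of Lemma \ref{lemma.pa4bxyr} produces $t^m h^3-t^m v_{-m}h^2+v_m h-1$, matching the denominator on the right-hand side of the theorem. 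Each numerator in the lemma is a product of these symmetric functions with a triple of the form $c_1 x_1^p+c_2 x_2^p+c_3 x_3^p$, which the substitution converts to $w_p$ for shifted exponents $p$ drawn from $n+km$, $n+m+km$, $n+2m+km$, $n-m$, $n$, and $n+m$; grouping the six resulting fractions by powers of $h$ recovers the two-fraction form stated.

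The only step that demands any real attention is the coefficient of $h^1$, which carries the mixed combination $(x_1^m+x_2^m+x_3^m)(c_1 x_1^n+c_2 x_2^n+c_3 x_3^n)$ minus a shifted triple; the substitutions reduce this cleanly to $v_m w_n-w_{n+m}$, as claimed. Since the generating-function bookkeeping is already absorbed into Lemma \ref{lemma.pa4bxyr}, the theorem is an essentially mechanical corollary of the Binet representation combined with the Vieta and power-sum identities for the characteristic roots; no genuinely new structural ingredient is required.
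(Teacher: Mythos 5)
Your proposal is correct and is exactly the paper's proof: the paper establishes the theorem by setting $(x_1,x_2,x_3)=(\alpha,\beta,\gamma)$ and $(c_1,c_2,c_3)=(A,B,C)$ in Lemma \ref{lemma.pa4bxyr}. You merely spell out the substitutions $(\alpha\beta\gamma)^m=t^m$, $\alpha^m+\beta^m+\gamma^m=v_m$ and $(\alpha\beta)^m+(\alpha\gamma)^m+(\beta\gamma)^m=t^mv_{-m}$ that the paper leaves implicit.
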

\begin{proof}
Set $(x_1,x_2,x_3)=(\alpha,\beta,\gamma)$ and $(c_1,c_2,c_3)=(A,B,C)$ in Lemma \ref{lemma.pa4bxyr}.
\end{proof}
In particular, we have
\begin{equation}
\sum_{j = 0}^k {w_j h^j }  = \frac{{tw_k h^{k + 3}  - (rw_{k + 1}  - w_{k + 2} )h^{k + 2}  + w_{k + 1} h^{k + 1}  - (c-rb-sa)h^2  + (ra - b)h - a}}{{th^3  + sh^2  + rh - 1}}
\end{equation}
and
\begin{equation}
\sum_{j = 0}^k {w_j }  = \frac{{tw_k  + (r - 1)(a + b - w_{k + 1} ) + w_{k + 2}  + as - c}}{{r + s + t - 1}}\,.
\end{equation}
Rabinowitz \cite[Identities (48), (49) and (50)]{rabinowitz96} are special cases of the identity of Theorem \ref{theorem.fx5i6dy}.
\begin{corollary}[\textbf{Generating function for the $w$ sequence with indices in arithmetic progression}]
The following identity holds for integers $m$, $n$ and $k$:
\[
\sum_{j = 0}^\infty {w_{n + jm} h^j }  =\frac{w_n - (v_m w_n  - w_{n + m} )h + t^m w_{n - m} h^2}{1 - v_m h + t^m v_{ - m} h^2 - t^m h^3}\,;
\]
and in particular,
\begin{equation}
\sum_{j = 0}^\infty {w_j h^j }  = \frac{a  - (ra - b)h + (c - rb - sa)h^2}{1 - rh -sh^2 -th^3}\,.
\end{equation}
\end{corollary}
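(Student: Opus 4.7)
The plan is to read off the generating function as the $k \to \infty$ limit of the partial-sum identity in Theorem \ref{theorem.fx5i6dy}. Observe that the denominator $t^m h^3 - t^m v_{-m} h^2 + v_m h - 1$ appearing there is already independent of $k$, while the only $k$-dependent numerator contributions are the three terms $t^m w_{n+km} h^{k+3}$, $-(v_m w_{n+m+km} - w_{n+2m+km}) h^{k+2}$, and $w_{n+m+km} h^{k+1}$.

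Either regarding the identity as an equality of formal power series in $h$ (in which case these three terms contribute only to powers $h^j$ with $j \geq k+1$ and drop out as $k \to \infty$), or, analytically, restricting to $|h|$ strictly smaller than $\max(|\alpha|, |\beta|, |\gamma|)^{-m}$ so that the Binet expansion $w_j = A\alpha^j + B\beta^j + C\gamma^j$ forces each of those three terms to zero, the limit exists and equals
\[
\sum_{j=0}^{\infty} w_{n+jm} h^j = \frac{-t^m w_{n-m} h^2 + (v_m w_n - w_{n+m}) h - w_n}{t^m h^3 - t^m v_{-m} h^2 + v_m h - 1}.
\]
Multiplying numerator and denominator by $-1$ brings this into the form stated in the corollary.

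For the specialization to $n = 0$, $m = 1$, I would use $v_1 = r$ and $t v_{-1} = -s$ (read from the $v_n$ row of the table following \eqref{eq.boxzh6a}), together with $w_0 = a$, $w_1 = b$, $w_2 = c$, and $t w_{-1} = c - rb - sa$ (from \eqref{eq.boxzh6a} evaluated at $n = 1$). Direct substitution of these values yields the numerator $a - (ra - b)h + (c - rb - sa)h^2$ over the denominator $1 - rh - sh^2 - th^3$, matching the displayed particular case.

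There is essentially no genuine obstacle here: the heavy lifting was already done in Theorem \ref{theorem.fx5i6dy}, and what remains is bookkeeping. The only point that warrants a word of care is the convergence question, which can be sidestepped entirely by reading the statement at the level of formal power series in $h$.
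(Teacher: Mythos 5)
Your proof is correct and takes the approach the paper intends: the corollary is stated without its own proof as the immediate $k\to\infty$ limit of the partial-sum identity of Theorem \ref{theorem.fx5i6dy}, and your limiting argument plus the $n=0$, $m=1$ specialization (using $v_1=r$, $tv_{-1}=-s$, $tw_{-1}=c-rb-sa$) is exactly that computation. The only quibble is that for negative $m$ the analytic radius of convergence is controlled by $\min(|\alpha|,|\beta|,|\gamma|)$ rather than the maximum, but your formal-power-series reading renders this moot.
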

\begin{theorem}[\textbf{Exponential generating function for the $w$ sequence with indices in arithmetic progression}]
The following identity holds for integers $m$ and $n$:
\[
\sum_{j = 0}^\infty  {\frac{{w_{mj + n} h^j }}{{j!}}}  = A\alpha^n e^{\alpha^m h}  + B\beta^n e^{\beta^m h}  + C\gamma^n e^{\gamma^m h}\,. 
\]
\end{theorem}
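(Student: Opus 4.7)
The plan is to exploit the Binet-style representation $w_n = A\alpha^n + B\beta^n + C\gamma^n$ already established in the introduction, together with the classical Maclaurin expansion $e^z = \sum_{j\ge 0} z^j/j!$, which converges absolutely for every complex $z$. Since $\alpha$, $\beta$, $\gamma$ are fixed complex numbers (the roots of the characteristic polynomial of $(w_n)$), the quantities $\alpha^m h$, $\beta^m h$, $\gamma^m h$ are fixed complex numbers, and the three exponential series indexed by $j$ converge absolutely. This absolute convergence is what justifies the term-by-term manipulations that follow; it is the only analytic point one needs to note, and it is not a serious obstacle.

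First I would substitute $w_{mj+n} = A\alpha^{mj+n} + B\beta^{mj+n} + C\gamma^{mj+n}$ into the left-hand side, giving
\[
\sum_{j=0}^\infty \frac{w_{mj+n} h^j}{j!} = \sum_{j=0}^\infty \frac{h^j}{j!}\bigl(A\alpha^{mj+n} + B\beta^{mj+n} + C\gamma^{mj+n}\bigr).
\]
Next I would split the single sum into three sums (legitimate by the absolute convergence noted above) and pull the $n$-dependent factors $\alpha^n$, $\beta^n$, $\gamma^n$ and the constants $A$, $B$, $C$ outside, obtaining
\[
A\alpha^n \sum_{j=0}^\infty \frac{(\alpha^m h)^j}{j!} + B\beta^n \sum_{j=0}^\infty \frac{(\beta^m h)^j}{j!} + C\gamma^n \sum_{j=0}^\infty \frac{(\gamma^m h)^j}{j!}.
\]
Finally I would identify each inner series as the exponential series evaluated at $\alpha^m h$, $\beta^m h$, $\gamma^m h$ respectively, and collect the three terms to obtain $A\alpha^n e^{\alpha^m h} + B\beta^n e^{\beta^m h} + C\gamma^n e^{\gamma^m h}$, as claimed.

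There is no genuinely hard step here; the identity is essentially a repackaging of three scalar exponential series through the Binet decomposition. If I wanted to emphasise the mildest subtlety, it would be that $m$ is permitted to be negative, in which case $\alpha^m$ etc.\ are still well-defined complex numbers (using $t = \alpha\beta\gamma \ne 0$), so the exponentials $e^{\alpha^m h}$, $e^{\beta^m h}$, $e^{\gamma^m h}$ remain meaningful and the proof goes through unchanged.
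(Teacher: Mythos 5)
Your proof is correct; the paper in fact states this theorem without any proof, and your argument (substitute the Binet form $w_{mj+n}=A\alpha^{mj+n}+B\beta^{mj+n}+C\gamma^{mj+n}$, split into three absolutely convergent exponential series, and identify each with $e^{\alpha^m h}$, $e^{\beta^m h}$, $e^{\gamma^m h}$) is exactly the routine computation the author evidently had in mind. Your remark that negative $m$ is harmless because $t=\alpha\beta\gamma\ne 0$ keeps $\alpha^m,\beta^m,\gamma^m$ well defined is a sensible touch.
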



\begin{theorem}[\textbf{Partial sum of the squares of the terms of the $v$ sequence, with indices in arithmetic progression}]\label{theorem.ezru03x}
The following identity holds for integers $m$, $n$ and~$k$:
\[
\begin{split}
&\sum\limits_{j = 0}^k {v_{mj + n}^2 h^j }\\
&\;\;\;= \frac{{t^{2m} v_{{2n} + 2km} h^{k + 3}  - (v_{2m} v_{{2n} + {2m} + 2km}  - v_{{2n} + 4m + 2km} )h^{k + 2} }}{{t^{2m} h^3  - t^{2m} v_{ - {2m}} h^2  + v_{2m} h - 1}}\\
&\;\;\;\;\;+\frac{{v_{{2n} + {2m} + 2km} h^{k + 1}  - t^{2m} v_{{2n} - {2m}} h^2  + (v_{2m} v_{2n}  - v_{{2n} + {2m}} )h - v_{2n} }}{{t^{2m} h^3  - t^{2m} v_{ - {2m}} h^2  + v_{2m} h - 1}}\\
&\;\;\;\;\;\;+ \frac{{2t^{2m} (v_{n + km}^2  - v_{2n + 2km} )h^{k + 3} }}{{2t^{2m} h^3  - 2t^m v_m h^2  + (v_m^2  - v_{2m} )h - 2}} - \frac{{(v_m^2  - v_{2m} )(v_{n + m + km}^2  - v_{2n + 2m + 2km} )h^{k + 2} }}{{2t^{2m} h^3  - 2t^m v_m h^2  + (v_m^2  - v_{2m} )h - 2}}\\
&\;\;\;\;\;\;\;+ \frac{{2(v_{n + 2m + km}^2  - v_{2n + 4m + 2km} )h^{k + 2} }}{{2t^{2m} h^3  - 2t^m v_m h^2  + (v_m^2  - v_{2m} )h - 2}} + \frac{{2(v_{n + m + km}^2  - v_{2n + 2m + 2km} )h^{k + 1} }}{{2t^{2m} h^3  - 2t^m v_m h^2  + (v_m^2  - v_{2m} )h - 2}}\\
&\;\;\;\;\;\;\;\; - \frac{{2t^{2m} (v_{n - m}^2  - v_{2n - 2km} )h^2  + (v_n^2  - v_{2n} )(v_m^2  - v_{2m} )h}}{{2t^{2m} h^3  - 2t^m v_m h^2  + (v_m^2  - v_{2m} )h - 2}} - \frac{{2(v_{n + m}^2  - v_{2n + 2m} )h + 2(v_n^2  - v_{2n} )}}{{2t^{2m} h^3  - 2t^m v_m h^2  + (v_m^2  - v_{2m} )h - 2}}\,.
\end{split}
\]
\end{theorem}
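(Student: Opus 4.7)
The plan is to reduce the square of the $v$-term to quantities whose sums are already accessible. Identity~\eqref{eq.xjwme1i}, rearranged, reads $v_k^2 = v_{2k} + 2 t^k v_{-k}$; substituting $k = mj + n$ and summing against $h^j$ splits the target into
\[
\sum_{j=0}^k v_{mj+n}^2\, h^j \;=\; \sum_{j=0}^k v_{2(mj+n)}\, h^j \;+\; 2\sum_{j=0}^k t^{mj+n} v_{-(mj+n)}\, h^j.
\]

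The first piece is handled immediately by Theorem~\ref{theorem.fx5i6dy}, applied to the $v$ sequence with $(m,n)$ replaced by $(2m,2n)$. This produces exactly the first two lines of the displayed right-hand side of the statement, whose denominator is $t^{2m} h^3 - t^{2m} v_{-2m} h^2 + v_{2m} h - 1$.

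For the second piece I would invoke Lemma~\ref{lemma.pa4bxyr} with the choice $(x_1,x_2,x_3) = (\alpha\beta,\alpha\gamma,\beta\gamma)$ and $c_1 = c_2 = c_3 = 1$. Using~\eqref{eq.g4j8ymn} and~\eqref{eq.o6c7482}, the relevant symmetric functions collapse to $(x_1 x_2 x_3)^m = t^{2m}$, $x_1^m + x_2^m + x_3^m = t^m v_{-m}$, $(x_1 x_2)^m + (x_1 x_3)^m + (x_2 x_3)^m = t^m v_m$, and each expression of the form $c_1 x_1^\ell + c_2 x_2^\ell + c_3 x_3^\ell$ appearing in the lemma becomes $t^\ell v_{-\ell}$ for the various exponents $\ell$. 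Re-expressing every $t^\ell v_{-\ell}$ as $(v_\ell^2 - v_{2\ell})/2$ via~\eqref{eq.xjwme1i}, then multiplying numerator and denominator by $2$ to clear the halves, normalises the denominator to $2 t^{2m} h^3 - 2 t^m v_m h^2 + (v_m^2 - v_{2m}) h - 2$; incorporating the overall prefactor of~$2$ from the decomposition delivers the remaining four lines of the statement.

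The conceptual work is minimal; the main obstacle is purely bookkeeping. One must track the six shifted exponents $mk+n$, $mk+m+n$, $mk+2m+n$, $n-m$, $n+m$ and $n$ produced by Lemma~\ref{lemma.pa4bxyr}, and in particular verify that quadratic cross-terms such as $(x_1^m + x_2^m + x_3^m)(c_1 x_1^n + c_2 x_2^n + c_3 x_3^n) = (v_m^2 - v_{2m})(v_n^2 - v_{2n})/4$ scale correctly under the subsequent doubling of numerator and denominator to reproduce the factor $(v_n^2 - v_{2n})(v_m^2 - v_{2m})$ visible on the last line of the theorem.
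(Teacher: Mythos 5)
Your proposal is correct and follows essentially the same route as the paper: both split $v_{mj+n}^2 = v_{2(mj+n)} + 2t^{mj+n}v_{-(mj+n)}$ via \eqref{eq.xjwme1i}, handle the first sum with Theorem \ref{theorem.fx5i6dy} at $(2m,2n)$, and evaluate the second by the same arithmetic-progression summation machinery. The only cosmetic difference is that you feed the second sum directly into Lemma \ref{lemma.pa4bxyr} with $(x_1,x_2,x_3)=(\alpha\beta,\alpha\gamma,\beta\gamma)$ and $c_1=c_2=c_3=1$, whereas the paper writes it as $2t^n\sum_{j}v_{-n-mj}(t^mh)^j$ and applies Theorem \ref{theorem.fx5i6dy} (itself an instance of that lemma) with $m\mapsto-m$, $n\mapsto-n$, $h\mapsto t^mh$; the two computations coincide term by term.
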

\begin{proof}
Replace $n$ with $n+mj$ in identity \eqref{eq.xjwme1i}, multiply through by $h^j$ and sum over $j$, obtaining
\[
\sum\limits_{j = 0}^k {v_{n + mj}^2 h^j }  = \sum\limits_{j = 0}^k {v_{2n + 2mj} h^j }  + 2t^n \sum\limits_{j = 0}^k {v_{ - n - mj} (t^m h)^j }\,.
\]
Now make use of Theorem \ref{theorem.fx5i6dy} to evaluate the sums on the right hand side.
\end{proof}
\begin{corollary}[\textbf{Generating function for the squares of the terms of the $v$ sequence, with indices in arithmetic progression}]\label{corollary.ywje905}
The following identity holds for integers $m$, $n$ and $k$:
\[
\begin{split}
\sum\limits_{j = 0}^\infty {v_{mj + n}^2 h^j }&=-\frac{{t^{2m} v_{{2n} - {2m}} h^2  - (v_{2m} v_{2n}  - v_{{2n} + {2m}} )h + v_{2n} }}{{t^{2m} h^3  - t^{2m} v_{ - {2m}} h^2  + v_{2m} h - 1}}\\
&\quad- \frac{{2t^{2m} (v_{n - m}^2  - v_{2n - 2km} )h^2  + (v_n^2  - v_{2n} )(v_m^2  - v_{2m} )h}}{{2t^{2m} h^3  - 2t^m v_m h^2  + (v_m^2  - v_{2m} )h - 2}}\\
&\qquad - \frac{{2(v_{n + m}^2  - v_{2n + 2m} )h + 2(v_n^2  - v_{2n} )}}{{2t^{2m} h^3  - 2t^m v_m h^2  + (v_m^2  - v_{2m} )h - 2}}\,.
\end{split}
\]
\end{corollary}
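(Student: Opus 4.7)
The proof reduces to passing to the limit $k \to \infty$ in Theorem~\ref{theorem.ezru03x}. First I would fix $h$ in the region of absolute convergence, namely where $|h\alpha^{2m}|$, $|h\beta^{2m}|$ and $|h\gamma^{2m}|$ are all strictly less than $1$. Since $v_{mj+n}^2$ is (by the identity $v_k = \alpha^k + \beta^k + \gamma^k$) a fixed finite linear combination of $\alpha^{2(mj+n)}$, $\beta^{2(mj+n)}$, $\gamma^{2(mj+n)}$ and their pairwise cross products, this region is precisely where $\sum_{j\ge 0} v_{mj+n}^2 h^j$ converges absolutely.

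Next I would separate the right-hand side of Theorem~\ref{theorem.ezru03x} into $k$-independent and $k$-dependent pieces. The $k$-dependent pieces are exactly the summands carrying an explicit factor $h^{k+1}$, $h^{k+2}$ or $h^{k+3}$, weighted by expressions such as $v_{2n+2km}$, $v_{n+\ell m + km}^2$ or $v_{2n+\ell m + 2km}$ with a small fixed $\ell \in \{0,1,2\}$. Each of these $v$-quantities is bounded in modulus by a constant times $\max(|\alpha|,|\beta|,|\gamma|)^{2km + O(1)}$, so the product with $h^{k+O(1)}$ decays geometrically to zero under the chosen convergence condition on $h$. Letting $k \to \infty$ therefore kills every such term.

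The remaining $k$-independent pieces, collected over the two common denominators $t^{2m} h^3 - t^{2m} v_{-2m} h^2 + v_{2m} h - 1$ and $2t^{2m} h^3 - 2t^m v_m h^2 + (v_m^2 - v_{2m}) h - 2$ already displayed in Theorem~\ref{theorem.ezru03x}, furnish exactly the formula claimed in the corollary. The only real obstacle is the bookkeeping: Theorem~\ref{theorem.ezru03x} has a fair number of fractions, and one has to be attentive to signs while discarding the $k$-dependent fragments. A marginally cleaner alternative is to redo the proof of Theorem~\ref{theorem.ezru03x} directly for infinite sums: apply \eqref{eq.xjwme1i} to split
\[
\sum_{j=0}^\infty v_{mj+n}^2 h^j \;=\; \sum_{j=0}^\infty v_{2n + 2mj} h^j \;+\; 2t^n \sum_{j=0}^\infty v_{-n-mj} (t^m h)^j,
\]
and then invoke the generating-function corollary immediately following Theorem~\ref{theorem.fx5i6dy} to evaluate each of the two geometric-type sums in closed form; rearranging yields the stated identity.
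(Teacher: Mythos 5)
Your proposal matches the paper's (implicit) argument: the corollary follows from Theorem \ref{theorem.ezru03x} by letting $k\to\infty$ with $h$ chosen so that $|\alpha^{2m}h|$, $|\beta^{2m}h|$, $|\gamma^{2m}h|<1$, which annihilates every term carrying a factor $h^{k+1}$, $h^{k+2}$ or $h^{k+3}$, and your alternative route through the split $\sum v_{2n+2mj}h^j + 2t^n\sum v_{-n-mj}(t^mh)^j$ is just the proof of Theorem \ref{theorem.ezru03x} rerun for infinite sums. The one caveat is that the surviving term written $v_{2n-2km}$ (in both the theorem and the corollary) must be read as $v_{2n-2m}$ --- a typo in the source --- since otherwise a genuinely $k$-dependent quantity would persist in the limit; with that correction your bookkeeping goes through.
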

\begin{theorem}[\textbf{Partial sum of the products of the terms of the $u$ sequence and the $v$ sequence, with indices in arithmetic progression}]\label{theorem.fkqnchf}
The following identity holds for integers $m$, $n$:
\[
\begin{split}
&\sum_{j = 0}^k {u_{n+mj} v_{n+mj}h^j }\\
&\;\;\;= \frac{{t^{2m} u_{{2n} + 2km} h^{k + 3}  - (v_{2m} u_{{2n} + {2m} + 2km}  - u_{{2n} + 4m + 2km} )h^{k + 2} }}{{t^{2m} h^3  - t^{2m} v_{ - {2m}} h^2  + v_{2m} h - 1}}\\
&\;\;\;\;\;+\frac{{u_{{2n} + {2m} + 2km} h^{k + 1}  - t^{2m} u_{{2n} - {2m}} h^2  + (v_{2m} u_{2n}  - u_{{2n} + {2m}} )h - u_{2n} }}{{t^{2m} h^3  - t^{2m} v_{ - {2m}} h^2  + v_{2m} h - 1}}\\
&\;\;\;\;\;\;- \frac{{t^{2m} (u_{2n + 2mk}  - u_{n + mk} v_{n + mk} )h^{k + 3} }}{{t^{2m} h^3  - t^m v_m h^2  + t^m v_{ - m} h - 1}} + \frac{{t^m v_{ - m} (u_{2n + 2m + 2mk}  - u_{n + m + mk} v_{n + m + mk} )h^{k + 2} }}{{t^{2m} h^3  - t^m v_m h^2  + t^m v_{ - m} h - 1}}\\
&\;\;\;\;\;\;\;- \frac{{(u_{2n + 4m + 2mk}  - u_{n + 2m + mk} v_{n + 2m + mk} )h^{k + 2} }}{{t^{2m} h^3  - t^m v_m h^2  + t^m v_{ - m} h - 1}} - \frac{{(u_{2n + 2m + 2mk}  - u_{n + m + mk} v_{n + m + mk} )h^{k + 1} }}{{t^{2m} h^3  - t^m v_m h^2  + t^m v_{ - m} h - 1}}\\
&\;\;\;\;\;\;\;\;+ \frac{{t^{2m} (u_{2n - 2m}  - u_{n - m} v_{n - m} )h^2 }}{{t^{2m} h^3  - t^m v_m h^2  + t^m v_{ - m} h - 1}} - \frac{{(t^m v_{ - m} (u_{2n}  - u_n v_n ) - u_{2n + 2m}  + u_{n + m} v_{n + m} )h}}{{t^{2m} h^3  - t^m v_m h^2  + t^m v_{ - m} h - 1}}\\
&\;\;\;\;\;\;\;\;\;+ \frac{{u_{2n}  - u_n v_n }}{{t^{2m} h^3  - t^m v_m h^2  + t^m v_{ - m} h - 1}}
\end{split}
\]
\end{theorem}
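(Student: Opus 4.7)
The strategy mirrors the proof of Theorem \ref{theorem.ezru03x}: split the summand into a $v$-type piece and a $u$-type piece using a product identity, then apply Theorem \ref{theorem.fx5i6dy} to each of the two resulting single sums. The analog of the identity $v_n^2 = v_{2n} + 2t^n v_{-n}$ that was used for squares of $v$ is, for the product $u_n v_n$, the identity
\[
u_n v_n = u_{2n} - t^n u_{-n}.
\]
To verify it, I expand $u_n v_n = (A'\alpha^n + B'\beta^n + C'\gamma^n)(\alpha^n+\beta^n+\gamma^n)$ and collect the off-diagonal cross terms using $\alpha\beta\gamma = t$, so that, e.g., $A'\alpha^n(\beta^n+\gamma^n) = A'\alpha^n\bigl(t^n/(\alpha\gamma)^n\cdot \gamma^n + t^n/(\alpha\beta)^n\cdot \beta^n\bigr)$, which after symmetrization contributes exactly $-t^n(A'\alpha^{-n}+B'\beta^{-n}+C'\gamma^{-n}) = -t^n u_{-n}$ beyond the diagonal part $u_{2n}$. (Equivalently, this is the case $m=n$ of Lemma \ref{lemma.iux34nd} with $(c_1,c_2,c_3)=(A',B',C')$, using $u_0=0$.)

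Next, I replace $n$ by $n+mj$ in this identity, multiply by $h^j$, and sum over $j=0,1,\dots,k$ to obtain
\[
\sum_{j=0}^k u_{n+mj} v_{n+mj}\,h^j \;=\; \sum_{j=0}^k u_{2n+2mj}\,h^j \;-\; t^n \sum_{j=0}^k u_{-(n+mj)}\,(t^m h)^j.
\]
Both sums on the right are now of the form treated by Theorem \ref{theorem.fx5i6dy} (applied to $u$ in place of $w$). The first is evaluated directly with parameters $(n,m)\mapsto(2n,2m)$, producing the block with denominator $t^{2m}h^3 - t^{2m}v_{-2m}h^2 + v_{2m}h - 1$. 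The second is evaluated with the substitutions $(n,m,h)\mapsto(-n,-m,t^m h)$; since $v_{-(-m)}=v_m$ and $t^{-m}(t^m h)^{\,\cdot}$ introduces powers of $t^m$, I multiply numerator and denominator by $t^{2m}$ to bring the denominator into the form $t^{2m}h^3 - t^m v_m h^2 + t^m v_{-m}h - 1$ quoted in the theorem.

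Finally, the numerators of the second block come out proportional to $t^{n+mj}u_{-(n+mj)}$ at the various shifts $j\in\{k,k+1,k+2,-1,0,1\}$ dictated by Theorem \ref{theorem.fx5i6dy}. To match the stated form verbatim, I rewrite each such factor via the same key identity read backwards, $t^{n+mj}u_{-(n+mj)} = u_{2(n+mj)} - u_{n+mj}v_{n+mj}$, so that the differences $u_{2n+2mk}-u_{n+mk}v_{n+mk}$, $u_{2n+2m+2mk}-u_{n+m+mk}v_{n+m+mk}$, etc., appear in the numerators exactly as in the theorem. Gathering the two blocks yields the claimed identity.

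\textbf{Anticipated difficulty.} The algebra itself is routine, but the main nuisance is bookkeeping: after the substitution $(n,m,h)\mapsto(-n,-m,t^m h)$ in Theorem \ref{theorem.fx5i6dy}, one picks up powers of $t^m$ attached to every monomial in $h$, and one must consistently convert $u_{-(\cdot)}$ into $u_{2(\cdot)}-u_\cdot v_\cdot$ to match the exact form displayed. Keeping track of the six numerator terms and their signs through this rescaling, while ensuring the two denominators remain distinct and are written in the normalized form of the statement, is the only real obstacle.
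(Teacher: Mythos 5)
Your proposal is correct and follows essentially the same route as the paper: the key splitting $u_nv_n = u_{2n} - t^n u_{-n}$ is exactly the paper's identity \eqref{eq.gih8b7c} (which the paper obtains from (H) at $m=n$, i.e.\ from Lemma \ref{lemma.iux34nd}, just as you note), and the paper likewise shifts $n\mapsto n+mj$, sums against $h^j$, and evaluates the two resulting sums with Theorem \ref{theorem.fx5i6dy}. Your independent verification of the splitting identity via the Binet forms and $u_0=0$ is a harmless addition; otherwise the arguments coincide.
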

\begin{proof}
Replace $n$ with $n+mj$ in identity \eqref{eq.gih8b7c}, multiply through by $h^j$ and sum over $j$, obtaining
\[
\sum\limits_{j = 0}^k {u_{n + mj} v_{n + mj} h^j }  = \sum\limits_{j = 0}^k {u_{2n + 2mj} h^j }  - t^n \sum\limits_{j = 0}^k {u_{ - n - mj} (t^m h)^j }\,.
\]
Now make use of Theorem \ref{theorem.fx5i6dy} to evaluate the sums on the right hand side.
\end{proof}
\begin{corollary}[\textbf{Generating function for the products of the terms of the $u$ sequence and the $v$ sequence, with indices in arithmetic progression}]
The following identity holds for integers $m$, $n$:
\[
\begin{split}
&\sum_{j = 0}^\infty {u_{n+mj} v_{n+mj}h^j }\\
&\;\;\;\;\;=-\frac{{t^{2m} u_{{2n} - {2m}} h^2  - (v_{2m} u_{2n}  - u_{{2n} + {2m}} )h + u_{2n} }}{{t^{2m} h^3  - t^{2m} v_{ - {2m}} h^2  + v_{2m} h - 1}}\\
&\;\;\;\;\;\;\;\;+ \frac{{t^{2m} (u_{2n - 2m}  - u_{n - m} v_{n - m} )h^2 }}{{t^{2m} h^3  - t^m v_m h^2  + t^m v_{ - m} h - 1}} - \frac{{(t^m v_{ - m} (u_{2n}  - u_n v_n ) - u_{2n + 2m}  + u_{n + m} v_{n + m} )h}}{{t^{2m} h^3  - t^m v_m h^2  + t^m v_{ - m} h - 1}}\\
&\;\;\;\;\;\;\;\;\;+ \frac{{u_{2n}  - u_n v_n }}{{t^{2m} h^3  - t^m v_m h^2  + t^m v_{ - m} h - 1}}
\end{split}
\]
\end{corollary}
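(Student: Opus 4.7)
The plan is to obtain the corollary as the $k\to\infty$ limit of Theorem \ref{theorem.fkqnchf}. Treating $h$ as a formal variable, or restricting to complex $h$ with $|\alpha^m h|,|\beta^m h|,|\gamma^m h|<1$ and $|t^m h/\alpha^m|,|t^m h/\beta^m|,|t^m h/\gamma^m|<1$, every term on the right-hand side of the finite identity that carries a factor $h^{k+1}$, $h^{k+2}$, or $h^{k+3}$ vanishes in the limit, and the surviving terms reproduce precisely what is displayed in the corollary.

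A more self-contained derivation mirrors the proof of Theorem \ref{theorem.fkqnchf}, but uses the generating-function corollary following Theorem \ref{theorem.fx5i6dy} in place of the partial-sum theorem itself. Substituting $n+mj$ for $n$ in identity \eqref{eq.gih8b7c} (which reads $u_n v_n = u_{2n} - t^n u_{-n}$), multiplying by $h^j$, and summing over $j\ge 0$ gives
\[
\sum_{j=0}^\infty u_{n+mj}\,v_{n+mj}\,h^j \;=\; \sum_{j=0}^\infty u_{2n+2mj}\,h^j \;-\; t^n\sum_{j=0}^\infty u_{-(n+mj)}\,(t^m h)^j.
\]
The first infinite sum is evaluated by the generating-function corollary applied to the $u$-sequence with $n\mapsto 2n$ and $m\mapsto 2m$; after flipping the sign of its numerator and denominator, it becomes the first fraction displayed in the statement. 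The second infinite sum is evaluated by the same corollary, this time with $w\mapsto u$, $n\mapsto -n$, $m\mapsto -m$, and with the summation variable $h$ replaced by $t^m h$; a second sign flip brings its denominator into the form $t^{2m}h^3 - t^m v_m h^2 + t^m v_{-m}h - 1$ that is common to the final three summands of the corollary.

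The sole nontrivial step is bookkeeping. The numerator produced by the second application involves the negative-index quantities $u_{-n}$, $u_{-(n+m)}$ and $u_{-(n-m)}$, which I would convert back to positive-index expressions via $t^k u_{-k} = u_{2k} - u_k v_k$ (i.e.\ identity \eqref{eq.gih8b7c} again). After absorbing the prefactor $-t^n$ together with the various compensating powers of $t$, the numerator separates cleanly by powers of $h^0$, $h^1$, and $h^2$ into exactly the three remaining fractions of the corollary, completing the proof.
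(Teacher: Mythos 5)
Your first argument (letting $k\to\infty$ in Theorem \ref{theorem.fkqnchf}) is exactly the route the paper implicitly takes, since the corollary is stated without proof immediately after that theorem, and your alternative derivation via the generating-function corollary checks out term by term. The only slip is the stated convergence condition: since the numerators of the $h^{k+1}$, $h^{k+2}$, $h^{k+3}$ terms grow like $\alpha^{2mk}$ (they involve $u_{2n+2km}$ and products $u_{n+mk}v_{n+mk}$), you need $|\alpha^{2m}h|<1$ (and likewise for $\beta,\gamma$) rather than $|\alpha^m h|<1$ — though your formal-power-series reading sidesteps this entirely.
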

\begin{theorem}[\textbf{Multiple argument formulas for the $v$ sequence}]
The following identities hold for integer $n$:
\begin{equation}\label{eq.v6amjy5}
v_{2n}=u_nv_{n+1}+(u_{n+1}-ru_n)v_n+tu_{n-1}v_{n-1}\,,
\end{equation}

\begin{equation}\label{eq.zthm4ml}
2v_{3n}  = 6t^n  - v_n^3  + 3u_n v_nv_{n+1}  + 3(u_{n + 1}  - ru_n )v_n^2  + 3tu_{n - 1} v_{n - 1} v_n\,,
\end{equation}

\begin{equation}\label{eq.bo6iltm}
\begin{split}
2v_{4n}  &= 2(u_{n + 1}  - ru_n )v_n^3 + (2tu_{n - 1} v_{n - 1}  + (u_{n + 1}  - ru_n )^2  + 2u_n v_{n + 1} )v_n^2\\
&\quad+ 2((u_{n + 1}  - ru_n )u_n v_{n + 1}  + 4t^n  + tu_{n - 1} v_{n - 1} (u_{n + 1}  - ru_n ))v_n\\
&\qquad+ t^2 u_{n - 1}^2 v_{n - 1}^2  + 2tu_n v_{n + 1} u_{n - 1} v_{n - 1}  + u_n^2 v_{n + 1}^2  - v_n^4\,.
\end{split}
\end{equation}
\end{theorem}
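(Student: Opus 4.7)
The unifying idea is to exploit the Binet-type representation $v_n=\alpha^n+\beta^n+\gamma^n$ in concert with the symmetric-function identities collected in Lemma \ref{lemma.t4jlec4}, and then use the quadratic relation \eqref{eq.xjwme1i} to swap $t^n v_{-n}$ and $t^{2n}v_{-2n}$ for expressions in non-negative indices, while \eqref{eq.v6amjy5} is used to convert $v_{2n}$ into a bilinear combination of $u$'s and $v$'s.

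First I would dispose of identity \eqref{eq.v6amjy5}: since $(v_n)$ is a special case of $(w_n)$ (with $(a,b,c)=(3,r,r^2+2s)$), Theorem \ref{theorem.jbqvo05} applied to $w_n=v_n$ with $m$ replaced by $n$ immediately yields the doubling formula.

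Next, for \eqref{eq.zthm4ml}, I would take $x_i=\alpha^n,\beta^n,\gamma^n$ in \eqref{eq.oihzhb7}. Using $\sum x_i=v_n$, $\sum_{i<j}x_ix_j=t^n v_{-n}$ and $x_1x_2x_3=t^n$, this gives $v_{3n}=v_n^3-3v_n\,t^n v_{-n}+3t^n$. Substituting $2t^n v_{-n}=v_n^2-v_{2n}$ from \eqref{eq.xjwme1i} produces $2v_{3n}=-v_n^3+3v_n v_{2n}+6t^n$, after which \eqref{eq.v6amjy5} supplies $v_{2n}$ and the stated cubling formula emerges.

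The same recipe handles \eqref{eq.bo6iltm} via \eqref{eq.jwknqg8}: with $x_i=\alpha^n,\beta^n,\gamma^n$ one obtains
\[
v_{4n}=v_n^4-4v_{2n}\,t^n v_{-n}-6t^{2n}v_{-2n}-8t^n v_n.
\]
Replacing $2t^n v_{-n}$ by $v_n^2-v_{2n}$ and $2t^{2n}v_{-2n}$ by $v_{2n}^2-v_{4n}$ (the latter from \eqref{eq.xjwme1i} with $n\mapsto 2n$) and solving for $v_{4n}$ yields the compact intermediate form $2v_{4n}=-v_n^4+2v_n^2 v_{2n}+v_{2n}^2+8t^n v_n$. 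Finally, substituting the right-hand side of \eqref{eq.v6amjy5} for $v_{2n}$ and expanding $v_{2n}^2=(u_n v_{n+1}+(u_{n+1}-ru_n)v_n+tu_{n-1}v_{n-1})^2$ produces three squares plus six cross terms. The only real obstacle is bookkeeping: once these nine terms are grouped with the $2v_n^2 v_{2n}$, $8t^n v_n$ and $-v_n^4$ contributions according to their powers of $v_n$, the regrouping shown in \eqref{eq.bo6iltm} follows by inspection.
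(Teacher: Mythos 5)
Your proposal is correct and follows essentially the same route as the paper: identity \eqref{eq.v6amjy5} from Theorem \ref{theorem.jbqvo05} with $m=n$ and $(w_n)=(v_n)$, and the triple and quadruple formulas from Lemma \ref{lemma.t4jlec4} evaluated at $(x_1,x_2,x_3)=(\alpha^n,\beta^n,\gamma^n)$, followed by elimination of the negative-index terms via \eqref{eq.xjwme1i} and substitution of \eqref{eq.v6amjy5} for $v_{2n}$. You merely make explicit the intermediate steps (e.g.\ $2v_{3n}=-v_n^3+3v_nv_{2n}+6t^n$ and $2v_{4n}=-v_n^4+2v_n^2v_{2n}+v_{2n}^2+8t^nv_n$) that the paper's one-line proof leaves to the reader, and your computations check out.
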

\begin{proof}
Identity \eqref{eq.v6amjy5} is obtained by setting $m=n$ in the identity of Theorem \ref{theorem.jbqvo05} with $(w_n)=(v_n)$. Identities \eqref{eq.zthm4ml} and \eqref{eq.bo6iltm} come from Lemma \ref{lemma.t4jlec4}, identities \eqref{eq.oihzhb7} and \eqref{eq.jwknqg8} respectively, by setting $(x_1,x_2,x_3)=(\alpha,\beta,\gamma)$.
\end{proof}
\begin{theorem}[\textbf{An identity for the $u$ sequence}]
The following identity holds for integer~$n$:
\[
\begin{split}
t^n  &= t^3 u_{n - 2}^3  + t(u_n  - ru_{n - 1} )^3  + t^2 u_{n - 1}^3 \\
&\quad + tu_{n - 1} (u_n  - ru_{n - 1} )( - su_{n - 1}  + r(u_n  - ru_{n - 1} ))\\
&\quad\; + ru_{n - 2} (u_n  - ru_{n - 1} )(tu_{n - 1}  + u_{n - 2} )\\
&\quad\;\; + t^2 (r^2  + 2s)u_{n - 1} u_{n - 2}^2  - t(r^3  + 3rs + 3t)u_{n - 1} u_{n - 2} (u_n  - ru_{n - 1} )\\
&\quad\;\;\; - su_{n - 2} (u_n  - ru_{n - 1} )^2  + t(s^2  - 2rt)u_{n - 2} u_{n - 1}^2\,.
\end{split}
\]
\end{theorem}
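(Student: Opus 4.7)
The plan is to exploit the factorisation $t^n = (\alpha\beta\gamma)^n = \alpha^n\beta^n\gamma^n$ in conjunction with identity \eqref{eq.txorvr0}, which expresses $\alpha^n$ as a quadratic polynomial in $\alpha$ with coefficients drawn from the $u$-sequence (and analogously for $\beta^n$ and $\gamma^n$). This reduces the claim to a completely explicit algebraic expansion: the right-hand side of the theorem is nothing but $\alpha^n\beta^n\gamma^n$ written out via \eqref{eq.txorvr0} and re-grouped.

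Concretely, setting $A = u_{n-1}$, $B = u_n - ru_{n-1}$, $C = tu_{n-2}$, identity \eqref{eq.txorvr0} gives $\alpha^n = A\alpha^2 + B\alpha + C$ with the analogous formulas for $\beta^n$, $\gamma^n$; multiplying and using $\alpha\beta\gamma = t$ on the left yields
\[
t^n \;=\; \prod_{\xi\in\{\alpha,\beta,\gamma\}}\bigl(A\xi^2 + B\xi + C\bigr).
\]
I would then expand this triple product into its ten constituent monomials $A^{a}B^{b}C^{c}$ with $a+b+c = 3$, each multiplied by a definite symmetric polynomial in $\alpha,\beta,\gamma$, and reduce each such symmetric polynomial to a polynomial in $r, s, t$ using Vi\`ete's relations \eqref{eq.g4j8ymn} ($\alpha+\beta+\gamma = r$, $\alpha\beta+\alpha\gamma+\beta\gamma = -s$, $\alpha\beta\gamma = t$) together with the power-sum identities in Lemma \ref{lemma.t4jlec4}. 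For instance, the coefficient of $A^3$ is $(\alpha\beta\gamma)^2 = t^2$; that of $B^3$ is $\alpha\beta\gamma = t$; that of $A^2C$ is $(\alpha\beta)^2+(\alpha\gamma)^2+(\beta\gamma)^2 = s^2 - 2rt$; that of $AC^2$ is $\alpha^2+\beta^2+\gamma^2 = r^2+2s$; and the mixed $ABC$ coefficient $\sum_{i\neq j}\alpha_i^{\,2}\alpha_j = (\alpha+\beta+\gamma)(\alpha^2+\beta^2+\gamma^2)-(\alpha^3+\beta^3+\gamma^3)$ simplifies via \eqref{eq.oihzhb7} to $r(r^2+2s)-(r^3+3rs+3t)$.

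Finally, I would substitute $A, B, C$ back in terms of $u_{n-2}, u_{n-1}, u_n$ and regroup the ten contributions into the bracketed pieces appearing on the right-hand side of the theorem. The main obstacle is purely organisational: the ten symmetric polynomials must each be identified, simplified, and then assembled so as to reproduce the particular factored grouping chosen in the statement. No ingredient beyond the direct expansion of the triple product and the elementary symmetric functions of the characteristic roots is required.
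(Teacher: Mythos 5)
Your proposal is correct and follows exactly the paper's own argument: the paper's proof likewise states that the identity comes from multiplying out $\alpha^n\beta^n\gamma^n$ using identity \eqref{eq.txorvr0} and its $\beta$, $\gamma$ analogues, with the symmetric functions reduced via \eqref{eq.g4j8ymn}. You have simply made explicit the bookkeeping that the paper leaves to the reader.
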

\begin{proof}
This identity comes from multiplying out $\alpha^n\beta^n\gamma^n$, using identity \eqref{eq.txorvr0} and the corresponding $\beta^n$ and $\gamma^n$ identities.
\end{proof}
\begin{theorem}[\textbf{An identity for the $v$ sequence}]
The following identity holds for integer~$n$:
\[
t^{ - n} (v_n^3 - v_{3n}) = t^n (v_{ - n}^3 - v_{ - 3n})\,.
\]
\end{theorem}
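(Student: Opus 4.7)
The plan is to apply the cube-sum identity \eqref{eq.oihzhb7} of Lemma \ref{lemma.t4jlec4} with the substitution $(x_1,x_2,x_3)=(\alpha^n,\beta^n,\gamma^n)$. Under this substitution we have $x_1+x_2+x_3=v_n$, $x_1x_2+x_1x_3+x_2x_3=(\alpha\beta)^n+(\alpha\gamma)^n+(\beta\gamma)^n=t^n v_{-n}$ (using the identity already recorded just before Theorem \ref{theorem.jbqvo05}), and $x_1x_2x_3=(\alpha\beta\gamma)^n=t^n$. The left-hand side $x_1^3+x_2^3+x_3^3$ becomes $\alpha^{3n}+\beta^{3n}+\gamma^{3n}=v_{3n}$.

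Plugging in, identity \eqref{eq.oihzhb7} collapses to the compact relation
\[
v_{3n}=v_n^{3}-3t^{n}v_n v_{-n}+3t^{n},
\]
which I would rearrange as $v_n^{3}-v_{3n}=3t^{n}(v_n v_{-n}-1)$. Multiplying through by $t^{-n}$ yields the left-hand side of the claimed identity in the simple form $3(v_n v_{-n}-1)$.

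The next step is to apply exactly the same substitution but with $n$ replaced by $-n$, i.e. $(x_1,x_2,x_3)=(\alpha^{-n},\beta^{-n},\gamma^{-n})$. By symmetry of the formulas, this produces $v_{-n}^{3}-v_{-3n}=3t^{-n}(v_{-n}v_n-1)$. Multiplying by $t^n$ gives the right-hand side also equal to $3(v_n v_{-n}-1)$, and the theorem follows immediately.

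There is essentially no obstacle here: once one recognises that identity \eqref{eq.oihzhb7} applied to the powers $(\alpha^n,\beta^n,\gamma^n)$ gives a closed-form expression for $v_n^{3}-v_{3n}$ that factorises as $3t^{n}(v_n v_{-n}-1)$, the symmetry between $n$ and $-n$ in that factorised form is what the theorem is asserting. The only point needing brief care is making sure the bookkeeping of $(\alpha\beta)^n+(\alpha\gamma)^n+(\beta\gamma)^n=t^{n}v_{-n}$ is used consistently for both $n$ and $-n$, which is transparent from $\alpha\beta\gamma=t$.
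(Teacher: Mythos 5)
Your proposal is correct and is essentially the paper's own proof: both apply identity \eqref{eq.oihzhb7} to the powers $(\alpha^n,\beta^n,\gamma^n)$ to obtain $t^{-n}(v_n^3-v_{3n})=3(v_nv_{-n}-1)$ and then observe that this expression is invariant under $n\mapsto -n$. (Your write-up is in fact slightly cleaner, since the paper states the substitution as $(\alpha,\beta,\gamma)$ while clearly intending the $n$-th powers.)
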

\begin{proof}
Setting $(x_1,x_2,x_3)=(\alpha,\beta,\gamma)$ in identity \eqref{eq.oihzhb7}, Lemma \ref{lemma.t4jlec4} gives
\begin{equation}
t^{ - n} v_{3n}  = t^{ - n} v_n^3  - 3v_n v_{ - n}  + 3\,,
\end{equation}
which, by replacing $n$ with $-n$ also means
\begin{equation}
t^n v_{ - 3n}  = t^n v_{ - n}^3  - 3v_n v_{ - n}  + 3\,,
\end{equation}
and hence the identity of the theorem.
\end{proof}
\begin{theorem}[\textbf{An identity for the $w$ sequence}]\label{theorem.b2dk1c1}
The following identity holds for integers $n$ and $m$:
\[\tag{H}
w_{n + m}  = v_m w_n  - t^m v_{ - m} w_{n - m}  + t^m w_{n - 2m}\,,
\]
or, by employing identity \eqref{eq.xjwme1i},
\begin{equation}\label{eq.xuxbcyk}
2w_{n + m}  = 2v_m w_n  - (v_m^2  - v_{2m} )w_{n - m}  + 2t^m w_{n - 2m}\,.
\end{equation}
\end{theorem}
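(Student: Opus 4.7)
The plan is to derive the identity by applying Lemma \ref{lemma.iux34nd} directly, with the substitutions $(x_1,x_2,x_3)=(\alpha,\beta,\gamma)$ and $(c_1,c_2,c_3)=(A,B,C)$, where $A,B,C$ are the Binet coefficients from the paper's introduction. Under these choices, the left-hand side $c_1 x_1^{n+m}+c_2 x_2^{n+m}+c_3 x_3^{n+m}$ becomes $w_{n+m}$ by the Binet form of $(w_n)$, and the first two terms on the right become $v_m w_n$ and $-\left((\alpha\beta)^m+(\alpha\gamma)^m+(\beta\gamma)^m\right)w_{n-m}$, which equals $-t^m v_{-m} w_{n-m}$ by the symmetric-function identity already recorded in the introduction.

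The only step requiring a short calculation is simplifying the final three-term group
\[
c_1 x_1^{n-m}(x_2 x_3)^m + c_2 x_2^{n-m}(x_1 x_3)^m + c_3 x_3^{n-m}(x_1 x_2)^m.
\]
Here I would factor $(\alpha\beta\gamma)^m = t^m$ out of each summand, which leaves $t^m\left(A\alpha^{n-2m}+B\beta^{n-2m}+C\gamma^{n-2m}\right) = t^m w_{n-2m}$. Combining the three pieces yields (H). The second form \eqref{eq.xuxbcyk} then follows by using identity \eqref{eq.xjwme1i} to rewrite $t^m v_{-m}$ as $(v_m^2 - v_{2m})/2$ and clearing the factor of two.

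As a sanity check and alternative derivation, one could argue directly from the fact that $\alpha^m,\beta^m,\gamma^m$ are the roots of the monic cubic
\[
X^3 - v_m X^2 + t^m v_{-m} X - t^m = 0,
\]
by Vieta's formulas applied to the symmetric functions computed above. Multiplying this cubic identity (satisfied by each of $\alpha^m,\beta^m,\gamma^m$) by the appropriate power $\rho^{n-2m}$ and taking the $A,B,C$-linear combination reproduces (H) immediately. I do not anticipate any real obstacle: the whole argument is a direct substitution into Lemma \ref{lemma.iux34nd} together with the already-established identifications of the elementary symmetric polynomials in $\alpha^m,\beta^m,\gamma^m$ with $v_m$, $t^m v_{-m}$, and $t^m$.
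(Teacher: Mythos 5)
Your proposal is correct and follows exactly the paper's route: the paper's entire proof is the single line ``Set $(x_1,x_2,x_3)=(\alpha,\beta,\gamma)$ and $(c_1,c_2,c_3)=(A,B,C)$ in Lemma \ref{lemma.iux34nd},'' and your simplifications of the symmetric-function terms (in particular factoring $(\alpha\beta\gamma)^m=t^m$ to obtain $t^m w_{n-2m}$) are precisely the computations that substitution entails. The Vieta-based alternative you sketch is a valid restatement of the same idea but adds nothing beyond the lemma.
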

\begin{proof}
Set $(x_1,x_2,x_3)=(\alpha,\beta,\gamma)$ and $(c_1,c_2,c_3)=(A,B,C)$ in Lemma \ref{lemma.iux34nd}.
\end{proof}
Identity (H) was discovered by Howard \cite{howard01}. However, the proof here is much simpler than the one given by Howard.

\medskip

Setting $m=n$ in identity \eqref{eq.xuxbcyk} gives
\begin{equation}\label{eq.gza79eq}
w_{ - n}  = \frac{{2w_{2n}  - 2v_n w_n  + a(v_n^2  - v_{2n} )}}{{2t^n }}\,,
\end{equation}
which is a simpler alternative to identity \eqref{eq.fsxsk5o} for the direct access to negative index terms of the $w$ sequence.

\medskip

From identity (H) we get
\begin{equation}\label{eq.vp79n99}
u_{n + m}  = v_m u_n  - t^m v_{ - m} u_{n - m}  + t^m u_{n - 2m}\,,
\end{equation}
which gives
\begin{equation}\label{eq.gih8b7c}
u_{ - n}  = (u_{2n}  - u_n v_n )/t^n 
\end{equation}
at $m=n$  and
\begin{equation}
u_{-n}=(u_{2n-3}-u_{n-2}v_{n-1})/t^{n-1}
\end{equation}
at $m=n+1$.

\medskip

Setting $n=2m$, and $n=2m-1$, in turn, in \eqref{eq.vp79n99} and changing index back to $n$,  we have
\begin{equation}
v_{ - n}  = \frac{{u_{2n} v_n  - u_{3n} }}{{t^n u_n }},\quad n\notin\{-1,0\}\,,
\end{equation}
and
\begin{equation}
v_{ - n}  = \frac{{u_{2n - 1} v_n  - u_{3n - 1} }}{{t^n u_{n - 1} }},\quad n\notin\{0,1\} \,.
\end{equation}

\begin{theorem}[\textbf{Reciprocal sums involving the $u$ sequence}]
The following identities hold for integers $k$ and $n$ for which the summand is non-singular in the summation interval:
\begin{equation}\label{eq.xr55z5q}
u_{n + 1} u_{n - k} \sum_{j = 0}^k {\frac{t^ju_{k - n - j - 1} }{{u_{n - k + j} u_{n - k + j + 1} }}}  = t^{k - n} (u_n u_{n - k}  - u_{n + 1} u_{n - k - 1} )\,,
\end{equation}
\begin{equation}
u_n u_{n - k - 1} \sum_{j = 0}^k {\frac{{t^j u_{k - n - j - 1} }}{{u_{n - k + j} u_{n - k + j - 1} }}}  = t^{k - n} (u_n u_{n - k}  - u_{n + 1} u_{n - k - 1} )\,.
\end{equation}
\end{theorem}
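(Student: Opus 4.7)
The plan is to recognize these reciprocal sums as telescoping sums, where the telescoping structure is hidden inside the negative-index evaluation formula \eqref{eq.lmwxb7q}.

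First I would reindex the summation in \eqref{eq.xr55z5q} by setting $i = n-k+j$, so that $j$ runs over $0,\dots,k$ corresponds to $i$ running over $n-k,\dots,n$, and the exponent in the numerator becomes $k-n-j-1 = -(i+1)$. The summand then reads
\[
\frac{t^{j}u_{k-n-j-1}}{u_{n-k+j}u_{n-k+j+1}} = \frac{t^{i-n+k}\,u_{-(i+1)}}{u_i u_{i+1}}.
\]
Next I would apply identity \eqref{eq.lmwxb7q} with $n$ replaced by $i+1$ to write $u_{-(i+1)} = (u_i^2 - u_{i+1}u_{i-1})/t^{i}$. Substituting this in, the factors of $t^i$ cancel and each summand collapses to
\[
t^{k-n}\left(\frac{u_i}{u_{i+1}} - \frac{u_{i-1}}{u_i}\right).
\]
This is the crux of the argument: the seemingly complicated reciprocal expression is exactly a first difference of the ratio $u_{i-1}/u_i$ scaled by $t^{k-n}$.

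Summing over $i = n-k,\dots,n$ then telescopes, yielding
\[
t^{k-n}\left(\frac{u_n}{u_{n+1}} - \frac{u_{n-k-1}}{u_{n-k}}\right),
\]
and multiplying through by $u_{n+1}u_{n-k}$ clears the denominators and produces the right-hand side $t^{k-n}(u_n u_{n-k} - u_{n+1}u_{n-k-1})$ of \eqref{eq.xr55z5q}.

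The second identity is handled identically: the same substitution $u_{-(i+1)} = (u_i^2 - u_{i+1}u_{i-1})/t^i$ converts its summand into $t^{k-n}\bigl(u_i/u_{i-1} - u_{i+1}/u_i\bigr)$, and the telescoping sum over $i = n-k,\dots,n$ equals $t^{k-n}(u_{n-k}/u_{n-k-1} - u_{n+1}/u_n)$; multiplying by $u_n u_{n-k-1}$ gives the desired right-hand side. I do not anticipate a real obstacle here; the only delicate point is the non-vanishing hypothesis, which is precisely the assumption that $u_{i}$ and $u_{i\pm 1}$ do not vanish over the indices in play, so that all the intermediate ratios are well-defined.
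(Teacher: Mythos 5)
Your proof is correct and is essentially the paper's own argument: the paper likewise divides identity \eqref{eq.lmwxb7q} (after an index shift) by $u_nu_{n+1}$ to obtain $\tfrac{t^n u_{-n-1}}{u_nu_{n+1}}=\tfrac{u_n}{u_{n+1}}-\tfrac{u_{n-1}}{u_n}$ and then telescopes. Your write-up just makes the reindexing and the telescoping limits explicit, which the paper leaves to the reader.
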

\begin{proof}
Division through identity \eqref{eq.lmwxb7q} by $u_nu_{n-1}$ and a shift of index gives
\begin{equation}
\frac{{t^n u_{ - n - 1} }}{{u_n u_{n + 1} }} = \frac{{u_n }}{{u_{n + 1} }} - \frac{{u_{n - 1} }}{{u_n }}\,,
\end{equation}
from which identity \eqref{eq.xr55z5q} follows by telescoping summation.
\end{proof}
\begin{theorem}[\textbf{Double binomial summation identities invoked by the definition of the generalized third order sequence}]
The following identities hold for positive integer $k$ and integers $m$ and $n$:
\begin{equation}
\sum_{j = 0}^k {\sum_{i = 0}^j {\binom kj\binom jit^{k - j} s^{k + j - i} r^i w_{n - 3k + j + i} } }  = s^k w_n\,,
\end{equation}
\begin{equation}
\sum_{j = 0}^k {\sum_{i = 0}^j {\binom kj\binom jis^{k - j} t^{k + j - i} r^i w_{n - 2k - j + 2i} } }  = t^k w_n\,,
\end{equation}
\begin{equation}
\sum_{j = 0}^k {\sum_{i = 0}^j {\binom kj\binom jir^{k - j} t^{k + j - i} s^i w_{n - k - 2j + i} } }  = t^k w_n\,,
\end{equation}
\begin{equation}
\sum_{j = 0}^k {\sum_{i = 0}^j {( - 1)^i \binom kj\binom jit^{k - j} s^{j - i}w_{n - 2k + j + 2i} } }  = ( - r)^k w_n\,,
\end{equation}
\begin{equation}
\sum_{j = 0}^k {\sum_{i = 0}^j {( - 1)^i \binom kj\binom jit^{k - j} r^{j - i} w_{n - k + 2j + i} } }  = ( - s)^k w_n\,,
\end{equation}
\begin{equation}
\sum_{j = 0}^k {\sum_{i = 0}^j {( - 1)^i \binom kj\binom jis^{k - j} r^{j - i} w_{n + k + j + i} } }  = ( - t)^k w_n\,.
\end{equation}
\end{theorem}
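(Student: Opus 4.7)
The plan is to observe that every one of the six claimed identities is a direct instantiation of Lemma \ref{lem.h2de9i7}. The generalized sequence $(w_n)$ satisfies the three-term recurrence \eqref{eq.vhrb5b3}, which fits the four-term template $hX_n = f_1 X_{n - c_1} + f_2 X_{n - c_2} + f_3 X_{n - c_3}$ of Lemma \ref{lem.h2de9i7} under the identification
\[
X_n = w_n,\qquad h = 1,\qquad (f_1,f_2,f_3) = (r,s,t),\qquad (c_1,c_2,c_3) = (1,2,3).
\]
Since $h=1$, every factor $h^i$ or $h^k$ appearing in the lemma collapses to $1$, which is exactly why no trace of $h$ remains in the theorem's identities.

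Having made this identification, I would simply read off each of the six conclusions of Lemma \ref{lem.h2de9i7} in turn. For the identity \eqref{eq.xfz9ytc}, the relevant differences are $c_3 - c_2 = 1$ and $c_2 - c_1 = 1$, so the summation index on $X$ becomes $n - 3k + j + i$, the weight becomes $t^{k-j} s^{k+j-i} r^i$, and the right-hand side is $s^k w_n$, producing the first identity of the theorem verbatim. The remaining five claims follow in the same mechanical way, using the tabulated differences $c_3 - c_1 = 2$, $c_1 - c_3 = -2$, $c_2 - c_3 = -1$, $c_1 - c_2 = -1$, together with $c_1 = 1$ and $c_2 = 2$ wherever they appear in the shift or the final factor $(-f_j)^k$.

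The only real work is bookkeeping, and it is not an obstacle so much as a check: for each of the six cases one must verify that the exponents and the argument of $w$ in the lemma's output agree with the statement. The parameter $m$ in the hypothesis of the theorem plays no role in the identities (it appears in neither the summands nor the right-hand sides), so I would regard it as vestigial from a more general formulation and make no use of it. With the substitutions above in place, the proof reduces to a single sentence: ``Take $X_n = w_n$, $h=1$, $(f_1,f_2,f_3)=(r,s,t)$, $(c_1,c_2,c_3)=(1,2,3)$ in Lemma \ref{lem.h2de9i7}.''
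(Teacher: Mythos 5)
Your proposal is correct and is precisely the paper's proof: the author also just invokes Lemma \ref{lem.h2de9i7} with the recurrence \eqref{eq.vhrb5b3}, i.e.\ $X_n=w_n$, $h=1$, $(f_1,f_2,f_3)=(r,s,t)$, $(c_1,c_2,c_3)=(1,2,3)$. Your version merely makes the substitutions explicit, and your remark that the parameter $m$ is vestigial here is also accurate.
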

\begin{proof}
The identities follow directly from Lemma \ref{lem.h2de9i7} and the recurrence relation \eqref{eq.vhrb5b3}.
\end{proof}
\begin{theorem}[\textbf{More double binomial summation identities involving third order sequences}]
The following identities hold for positive integer $k$ and integers $m$ and $n$:
\begin{equation}
\sum_{j = 0}^k {\sum_{i = 0}^j {( - 1)^i \binom kj\binom ji(tu_{m - 1} )^{k - j}u_m^{k + j - i} w_{n - k + 2j + (m - 1)i} } }  = (u_{m + 1}  - ru_m )^k ( - u_m )^k w_n\,,
\end{equation}
\begin{equation}
\sum_{j = 0}^k {\sum_{i = 0}^j {( - 1)^i \binom kj\binom jiu_m^{k - j} (tu_{m - 1} )^{k + j - i} w_{n + k - 2j + (m + 1)i} } }  = (u_{m + 1}  - ru_m )^k ( - tu_{m - 1} )^k w_n\,,
\end{equation}
\begin{equation}
\sum_{j = 0}^k {\sum_{i = 0}^j {( - 1)^j \binom kj\binom ji(tu_{m - 1} )^{k + j - i} u_m^i w_{n + mk - (m + 1)j + 2i} } }  = (u_{m + 1}  - ru_m )^k (tu_{m - 1} )^k w_n\,,
\end{equation}
\begin{equation}
\sum_{j = 0}^k {\sum_{i = 0}^j {\binom kj\binom ji(tu_{m - 1} )^{k - j} u_m^{j - i} (u_{m + 1}  - ru_m )^i w_{n - (m + 1)k + 2j - i} } }  = w_n\,,
\end{equation}
\begin{equation}
\sum_{j = 0}^k {\sum_{i = 0}^j {( - 1)^{i + j} \binom kj\binom ji(tu_{m - 1} )^{k - j} (u_{m + 1}  - ru_m )^i w_{{n - 2k + (m + 1)j - mi}} } }  = ( - 1)^k u_m^k w_n
\end{equation}
and
\begin{equation}
\sum_{j = 0}^k {\sum_{i = 0}^j {( - 1)^{i + j} \binom kj\binom jiu_m ^{k - j} (u_{m + 1}  - ru_m )^i w_{n + 2k + (m - 1)j - mi} } }  = ( - 1)^k (tu_{m - 1} )^k w_n\,.
\end{equation}
\end{theorem}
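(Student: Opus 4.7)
The plan is to reduce all six identities to the six master identities of Lemma \ref{lem.h2de9i7} via a single change of variables. By Theorem \ref{theorem.jbqvo05}, exchanging the roles of $m$ and $n$ (legitimate because $n+m=m+n$) yields
\[
w_{n+m} = u_m w_{n+1} + (u_{m+1}-ru_m) w_n + t u_{m-1} w_{n-1}
\]
for all integers $m,n$. Solving for the middle term rewrites this as
\[
(u_{m+1}-ru_m)\, w_n = 1\cdot w_{n-(-m)} + (-u_m)\, w_{n-(-1)} + (-tu_{m-1})\, w_{n-1},
\]
which is exactly the four-term recurrence hypothesis of Lemma \ref{lem.h2de9i7} with $X_n = w_n$, $h = u_{m+1}-ru_m$, $(f_1,c_1)=(1,-m)$, $(f_2,c_2)=(-u_m,-1)$, and $(f_3,c_3)=(-tu_{m-1},1)$. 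Under this identification the index differences $c_2-c_1 = m-1$, $c_3-c_2 = 2$, and $c_3-c_1 = m+1$ are precisely the coefficients of $i$, $j$, and $k$ that appear in the exponents of $w$ in the six target identities.

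With the substitution fixed, each of the six target identities follows by direct plug-in into the correspondingly ordered identity of Lemma \ref{lem.h2de9i7}. For example, identity \eqref{eq.xfz9ytc} produces target 1: the coefficient $f_3^{k-j} f_2^{k+j-i} f_1^i$ becomes $(-tu_{m-1})^{k-j}(-u_m)^{k+j-i}$, whose accumulated sign $(-1)^{2k-i}$ collapses to $(-1)^i$, yielding the stated summand $(-1)^i(tu_{m-1})^{k-j} u_m^{k+j-i}\, w_{n-k+2j+(m-1)i}$, while the right-hand side $h^k f_2^k X_n$ is exactly $(u_{m+1}-ru_m)^k(-u_m)^k w_n$. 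Targets 2, 4, and 5 are produced analogously from the (unlabelled) second, fourth, and fifth identities of Lemma \ref{lem.h2de9i7}, target 3 comes from \eqref{eq.vmi1kkt}, and target 6 from \eqref{eq.o7540wl}; in targets 3, 5, and 6 an overall factor of $(-1)^k$ appears on both sides and is cancelled.

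The main obstacle is purely organizational. One must carefully track which of $f_1,f_2,f_3$ carries the minus sign in each of the six substitutions, verify that the accumulated sign of each summand reduces to the $(-1)^i$, $(-1)^j$, or $(-1)^{i+j}$ pattern recorded in the theorem, and confirm that each index shift of the form $n - c_\ell k + (c_\ell-c_{\ell'})j + (c_{\ell'}-c_{\ell''})i$ with $(c_1,c_2,c_3)=(-m,-1,1)$ simplifies to the displayed argument of $w$. A further minor caveat is that Lemma \ref{lem.h2de9i7} requires non-vanishing $f_i$ and $h$; the degenerate values of $m$ for which $u_m u_{m-1}(u_{m+1}-ru_m)=0$ may be checked directly (both sides typically collapse trivially) or handled by polynomial continuation in the parameters $r,s,t$.
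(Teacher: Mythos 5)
Your proof is correct and is essentially the paper's own argument: rewrite the identity of Theorem \ref{theorem.jbqvo05} (with the roles of $m$ and $n$ interchanged) in the four-term form $hX_n=f_1X_{n-c_1}+f_2X_{n-c_2}+f_3X_{n-c_3}$ and read the six identities off Lemma \ref{lem.h2de9i7}. Your identification $f_3=-tu_{m-1}$ is the right one (the paper's proof lists $f_3=tu_{m-1}$, a sign typo inconsistent with its own displayed rearrangement), as your sign check on the first identity confirms.
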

\begin{proof}
Write the identity of Theorem \ref{theorem.jbqvo05} as
\[
(u_{m + 1}  - ru_m )w_n  = w_{n + m}  - u_m w_{n + 1}  - tu_{m - 1} w_{n - 1}\,. 
\]
Identify $h=u_{m+1}-ru_m$, $f_1=1$, $f_2=-u_{m}$, $f_3=tu_{m-1}$, $c_1=-m$, $c_2=-1$ and $c_3=1$ and use these in Lemma \ref{lem.h2de9i7}.
\end{proof}

\hrule

\noindent 2010 {\it Mathematics Subject Classification}:
Primary 11B39; Secondary 11B37.

\noindent \emph{Keywords: }
Third order sequence, Tribonacci sequence, Padovan sequence, Perrin sequence, summation identity, partial sum, generating function.

\hrule



\end{document}